  \newtheorem{theorem}{Theorem}[section]
\newtheorem{df}[theorem]{Definition}
  \newtheorem{lemma}[theorem]{Lemma}
    \newtheorem{conjecture}[theorem]{Conjecture}
\theoremstyle{definition}
 \newtheorem{ex}[theorem]{Example}
\newtheorem{remark}[theorem]{Remark}
\newcommand{\ZZ}{\mathbb{Z}}
\newcommand{\NN}{\mathbb{N}}
\newcommand{\ord}{\mathrm{ord}}
\begin{document}

\title{On Products of Strong Skolem Starters}
\author{
\vspace{0.25in}
Oleg Ogandzhanyants\footnote{oleg.ogandzhanyants@mun.ca} \hspace{1cm} Margarita Kondratieva\footnote{mkondra@mun.ca} \hspace{1cm} Nabil Shalaby\footnote{nshalaby@mun.ca}  \\
 Department of Mathematics and Statistics \\
 Memorial University of Newfoundland \\
 St. John's, Newfoundland \\
 CANADA A1C 5S7
}
 \maketitle
\begin{abstract}
In 1991, Shalaby conjectured that any $\ZZ_{n}$, where $n\equiv1$ or $3\pmod{8},\ n\ge11$, admits a strong Skolem starter. In 2018, the authors explicitly constructed some infinite ``cardioidal'' families of strong Skolem starters. No other infinite families of these combinatorial designs were known to date.

Statements regarding the products of starters, proven in this paper give a new way of generating strong or skew Skolem starters of composite orders. This approach extends our previous result by generating new infinite families that are not cardioidal.

The products that we introduce in this paper are multi-valued binary operations 
 which produce 2-partitions of  the set $\ZZ^*_{nm}$  of integers modulo $nm$ without zero,   from a pair of 2-partitions of $\ZZ^*_n$ and $\ZZ^*_m$, where $n, m\ge 3$ are odd integers. We prove several remarkable properties of these operations applied to starters in $\ZZ_n$ and to some other combinatorial objects that are 2-partitions of $\ZZ^*_n$ with additional restrictions, such as strong, skew, Skolem and cardioidal 2-partitions.

\end{abstract}
{\bf Keywords:} strong starter; Skolem starter; skew starter; Room square; Steiner triple system; 2-partition.

\section{Introduction}\label{introduction}

\indent A \textit{starter}  in an additive abelian group $G$ of odd order $n$ is a partition of the set $G^*$ of all non-zero elements of $G$ into $q=(n-1)/2$ pairs $\{\{s_i,t_i\}\}_{i=1}^q$ such that the elements $\pm(s_i-t_i), i=1,...,q$, comprise $G^*$.

Starters exist in any additive abelian group of odd order $n\ge3$. For example, the partition $\{\{x,-x\}\mid x\in G,x\ne0\}$ of $G^*$  is a starter in $G$. For convenience, we will call a partition of a set of even cardinality into pairs a \textit{2-partition} of this set.

In this paper, we will consider only cyclic additive abelian groups, more precisely, groups $\mathbb{Z}_n$ of integers modulo $n$, where $n\ge3$ is odd.

\begin{df}\label{strong, skew starters}
 A 2-partition  $S=\{\{x_i,y_i\}\}_{i=1}^q$ of $\ZZ^*_{n}$,  $ n=2q+1$, $q\ge 1$ is called

(a) a  starter  in $\ZZ_n$, if
\begin{equation}\label{starter}
\{\pm(x_i-y_i)\pmod{n}|\{x_i,y_i\}\in S,\ 1\le i\le q\}=\ZZ^*_{n};
 \end{equation}

(b)  strong, if
\begin{equation}\label{strong}
\hat S=\{(x_i+y_i)\pmod{n}|\{x_i,y_i\}\in S,\ 1\le i\le q\}\subset \ZZ^*_{n}\quad {\rm and} \quad |\hat S|=q;\end{equation}

(c) skew, if
\begin{equation}\label{skew}
\{\pm(x_i+y_i)\pmod{n}|\{x_i,y_i\}\in S,\ 1\le i\le q\}=\ZZ^*_{n};\end{equation}

(d) cardioidal (\cite{b21}), if all its pairs are cardioidal of order $n$, that is, if each pair of the partition 
\begin{equation}\label{card}\{x,y\}=\{i,2i\pmod{n}\}\end{equation} 
for some $i\in\ZZ_n^*$;

(e) Skolem, if all its pairs are Skolem of order $n$, that is, if each pair $\{x,y\}$ of the partition is such that
\begin{equation} \label{skolem}
y-x \le q\pmod{n} \Leftrightarrow y>x.
\end{equation}  
Here we assume $1<2<...<2q$ to be the order of the non-zero integers modulo $n$.

\end{df} 
We will refer to 2-partitions of $\ZZ_n^*$ as 2-partitions {\it of order} $n$.
A  {\it strong starter} in $\ZZ_n$ is a 2-partition of order $n$ that possesses properties (\ref{starter}) and (\ref{strong}). 
A  {\it skew starter} in $\ZZ_n$ is a 2-partition of order $n$ that possesses properties (\ref{starter}) and (\ref{skew}). 
Clearly, skew 2-partitions comprise a subset of strong 2-partitians. Consequently, any skew starter is strong. Also, it is known that  if a partition is cardioidal, then it is Skolem \cite{b21}. 
All the other implications are absent, and a 2-partition may hold any combination of these properties independently from one another.  

\begin{ex}\label{example of independence of properties}
The 2-partition $R=\{\{1,2\},\{3,4\}\}$  of $\ZZ_5^*$ is strong ($0\ne 1+2\ne 3+4\ne 0\pmod{5}$) and cardioidal $(i=1\, {\rm and}\, 4)$ but not a starter  ($2-1=4-3$) and not skew ($\pm 3=\mp 7 \pmod{5}$).\\
The 2-partition $Q=\{\{1,3\},\{2,5\},\{4,6\},\{7,8\}\}$ of $\ZZ_9^*$ is Skolem and skew: $\{\pm(1+3)\equiv \pm 4\pmod{9},\pm(2+5)\equiv \mp 2\pmod{9}, \pm(4+6)\equiv \pm 1\pmod{9},\pm(7+8)\equiv \mp 3\pmod{9}\}=\ZZ_9^*$. But it is not a starter ($3-1=6-4$), nor is it cardioidal as, for example, the pair $\{1,3\}$ does not satisfy property (\ref{card}).\\
The starter $T=\{\{2,3\},\{4,6\},\{5,1\}\}$ in $\ZZ_7$ is strong and skew: $\{\pm(2+3)\equiv \pm 5\pmod{7}, \pm(4+6)\equiv \pm 3\pmod{7},\pm(5+1)\equiv \mp1\pmod{7}\}=\ZZ_7^*$, but not Skolem, as the pair $\{5,1\}$ does not satisfy property (\ref{skolem}).\\
The starter $S=\{\{1,2\},\{10,12\},\{3,6\},\{4,8\},\{11,16\},\{9,15\},\{7,14\},\{5,13\}\}$ in $\ZZ_{17}$ is strong as all the pairs yeild pairwise different non-zero sums $\pmod{17}$ and Skolem \cite{b09}. However, $S$ is not skew as, for example, the pairs $\{10,12\}$ and $\{4,8\}$ yeild the sums $5\pmod{17}$ and $12\pmod{17}$, respectively, and $5\equiv -12\pmod{17}$. Neither $S$ is cardioidal as, for example, the pair $\{5,13\}$ does not satisfy property (\ref{card}). 
\end{ex}

First, strong starters were introduced by Mullin and Stanton in 1968 \cite{b016} for constructing Room squares  and Howell designs. In 1969, Mullin and Nemeth \cite{b01} gave a general construction for finding these starters in cyclic groups.

The question of the existence (or non-existence) of a strong starter in an abelian group is crucial in the theory of Room squares. We refer readers interested in constructions of strong starters to \cite{b014}, \cite{b05}, \cite{b01} and the references therein.

Strong starters in groups of order 3, 5 and 9 do not exist \cite[p.144]{b06}. It is an open question whether there exists a strong starter in every cyclic group of an odd order exceeding 9. In 1981, Dinitz and Stinson \cite{b07} found (by a computer search) strong starters in the cyclic group of order $n$ for all odd $7\le n\le999,\ n\ne 9$.

At present, the strongest known general statement on the existence of strong starters is the following \cite[p.625]{b014}: \textit{For any $n>5$ coprime to 6, an abelian group of order n admits a strong starter.}

Skew starters give rise to special Room squares called \textit{skew Room squares}, important combinatorial designs. It is known \cite[p.627]{b014} that skew starters of order $n$ do not exist, if $3|n$.
\smallskip

\smallskip
\textit{Skolem starters}, the objects of our close attention, are defined only in $\ZZ_n$. 
 \begin{df}\label{defn of skolem starter1}
Let $n=2q+1$, and $1<2<...<2q$ be the order of the non-zero integers modulo n. A starter in $\mathbb{Z}_n$ is Skolem if it can be written as a set of ordered pairs $\{(s_i,t_i)\}_{i=1}^q$, where $t_i-s_i\equiv i\pmod{n}$ and $t_i>s_i,\ 1\le i\le q$.
\end{df}

Skolem starters received their name \cite{b09} after Skolem sequences. A \textit{Skolem sequence} of order $q$ is a sequence $(s_1,...,s_{2q})$ of integers from $D=\{1,...,q\}$ such that for each $i\in D$ there is exactly one $j\in \{1,...,2q\}$ such that $s_j=s_{j+i}=i$.  Skolem sequences exist iff $q\equiv0$ or $1\pmod{4}$ \cite{b014}. They were originally used by Skolem in 1957 for the construction of Steiner triple systems \cite{b02}. 

Skolem sequences are widely applied in many areas such as triple systems, balanced ternary designs, factorization of complete graphs, starters, labelling graphs. Readers intersted in these applications may address their attention to \cite{b23} and the references therein.

Given a Skolem sequence $(x_1,x_2,...,x_{2q})$, consider all pairs $\{i_k,j_k\}$ such that $j_k>i_k$ and $x_{i_k}=x_{j_k}=k,\ k=1,...,q$. This set of pairs forms a partition of the set $\mathbb{Z}_n^*$ of all non-zero elements of $\ZZ_n$, where $n=2q+1$. Since $j_k-i_k\equiv k\pmod{n}$, (and consequently, $i_k-j_k\equiv -k\pmod{n}),\ k=1,...,q$, this set of pairs is a starter in $\mathbb{Z}_n$. 

\begin{ex}\label{skolem sequence} Sequence $(1,1,5,2,4,2,3,5,4,3)$ is a Skolem sequence of order $5$: the length of the sequence is $2\cdot5=10$, and $x_1=x_2=1,x_4=x_6=2,x_7=x_{10}=3,x_5=x_9=4, x_3=x_8=5$, so $1$'s, $2$'s, $3$'s, $4$'s and $5$'s are one, two, three, four and five positions apart, respectively.  This Skolem sequence yields a starter $T=\{\{1,2\},\{4,6\},\{7,10\},\{5,9\},\{3,8\}\}$ in $\mathbb{Z}_{11}$.
\end{ex}

\begin{lemma}\label{defn of Skolem starter2} A starter $S$ in $\ZZ_k$ is Skolem if and only if all its pairs are Skolem of order $k$. In other words, Skolem starters are partitions with properties (\ref{starter}) and (\ref{skolem}).
\end{lemma}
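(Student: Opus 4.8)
The plan is to reconcile the two descriptions of a Skolem starter: Definition~\ref{defn of skolem starter1}, which presents it as an indexed family of ordered pairs $\{(s_i,t_i)\}_{i=1}^q$ with $t_i-s_i\equiv i\pmod k$ and $t_i>s_i$, and the pointwise condition~(\ref{skolem}) that every pair be Skolem of order $k$, together with the starter property~(\ref{starter}). Throughout I identify $\ZZ_k^*$ with $\{1,2,\dots,2q\}$ via canonical representatives, equipped with the order $1<2<\dots<2q$ fixed in the definition, and write $k=2q+1$; note that a 2-partition of $\ZZ_k^*$ necessarily consists of exactly $q$ pairs.

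The technical heart is a single elementary observation about a $2$-subset $\{s,t\}\subseteq\{1,\dots,2q\}$ with $s<t$. Since $1\le t-s\le 2q-1<k$, the integer $t-s$ is already the canonical representative of $(t-s)\bmod k$, while $(s-t)\bmod k=k-(t-s)$; hence exactly one of these two nonzero residues lies in $\{1,\dots,q\}$, namely $t-s$ itself precisely when $t-s\le q$. Applying~(\ref{skolem}) to $\{x,y\}=\{s,t\}$ with the labelling $x=s$, $y=t$ (so that ``$y>x$'' holds) then shows that the pair $\{s,t\}$ is Skolem of order $k$ if and only if $t-s\le q$ (the opposite labelling gives the same condition, the biconditional in~(\ref{skolem}) being symmetric in $x,y$). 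I would state and prove this equivalence first, since it is used in both directions.

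For the implication ``Skolem in the sense of Definition~\ref{defn of skolem starter1} $\Rightarrow$ properties~(\ref{starter}),~(\ref{skolem})'': such an $S$ is by hypothesis a starter, so~(\ref{starter}) holds; and for each $i$, since $t_i-s_i\in\{1,\dots,2q-1\}$ and $t_i-s_i\equiv i\pmod k$ with $i\in\{1,\dots,q\}$, the only possibility is $t_i-s_i=i\le q$, so the observation gives that $\{s_i,t_i\}$ is Skolem of order $k$; as the $P_i$ exhaust $S$, property~(\ref{skolem}) follows.

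For the converse, suppose $S$ is a starter all of whose pairs satisfy~(\ref{skolem}). Write each pair as $P=\{s,t\}$ with $s<t$; by the observation $t-s\le q$, so set $\ell(P):=t-s\in\{1,\dots,q\}$. The two differences that $P$ contributes to~(\ref{starter}) are the distinct residues $\ell(P)$ and $k-\ell(P)$; since $S$ has $q$ pairs and $|\ZZ_k^*|=2q$, the set equality in~(\ref{starter}) forces the $q$ two-element sets $\{\ell(P),k-\ell(P)\}$ to partition $\ZZ_k^*$, and in particular the values $\ell(P)$ are pairwise distinct, so $\{\ell(P):P\in S\}=\{1,\dots,q\}$. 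Re-indexing the pairs as $P_i$ with $\ell(P_i)=i$ and writing $P_i=\{s_i,t_i\}$ with $s_i<t_i$ yields $t_i-s_i=i\equiv i\pmod k$ and $t_i>s_i$, which is exactly the form in Definition~\ref{defn of skolem starter1}. I do not expect a genuine obstacle; the only points needing care are keeping the \emph{integer} difference of canonical representatives distinct from the \emph{modular} difference, and using the count ``$q$ pairs versus $2q$ nonzero residues'' to upgrade the set equality in~(\ref{starter}) to the statement that the small differences form a permutation of $\{1,\dots,q\}$.
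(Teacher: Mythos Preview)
Your argument is correct and is precisely the unpacking that the paper has in mind: the paper's own proof is the single sentence ``The lemma follows from Definitions~\ref{strong, skew starters} and~\ref{defn of skolem starter1},'' so your writeup is the same approach, just made fully explicit rather than left to the reader.
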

\begin{proof}
The lemma follows from Definitions \ref{strong, skew starters} and \ref{defn of skolem starter1}.
\end{proof}

Clearly, Skolem starters in $\mathbb{Z}_n$ are in one-to-one correspondence with Skolem sequences of order $q=(n-1)/2$. Therefore, Skolem starters exist in $\mathbb{Z}_n$ iff $n\equiv1$ or $3\pmod{8}$.


{\it Strong Skolem starters} are partitions with properties (\ref{starter}), (\ref{strong}), and  (\ref{skolem}).
The value of strong Skolem starters of order $2q+1$ is in their applicability in constructing Room squares and cubes of order $2q+2$  on one hand, and \textit{Steiner triple systems}, STS($6q+1$), on the other. Recall that an STS($v$) is a collection of $3$-subsets, called $blocks$, of a $v$-set $S$, such that every two elements of $S$ occur together in exactly one of the blocks.
 \begin{theorem}\label{Shalaby theorem} $($Shalaby, $1991\ \cite[pp.60-62]{b09}.)$ For $11\le n\le 57, n\equiv1$ or $3\pmod{8}$, $\mathbb{Z}_n$ admits a strong Skolem starter.
\end{theorem}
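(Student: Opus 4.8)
The statement is a finite verification: the hypotheses $11\le n\le 57$ and $n\equiv 1,3\pmod 8$ single out exactly the twelve values
\[
n\in\{11,17,19,25,27,33,35,41,43,49,51,57\},
\]
and for each we must produce a 2-partition of $\ZZ_n^*$ that simultaneously satisfies (\ref{starter}), (\ref{strong}) and (\ref{skolem}). By Lemma \ref{defn of Skolem starter2} and the remark following it, a Skolem starter in $\ZZ_n$ is precisely the set of pairs arising from a Skolem sequence of order $q=(n-1)/2$, and such sequences exist exactly when $q\equiv 0$ or $1\pmod 4$, i.e.\ exactly when $n\equiv 1$ or $3\pmod 8$. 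So for all twelve of our orders a Skolem starter certainly exists; the entire content of the theorem is that, in addition, one can be chosen whose $q$ pair-sums $x_i+y_i\pmod n$ are pairwise distinct and all nonzero.

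The plan is therefore, for each of the twelve values of $n$, to exhibit one explicit Skolem sequence (equivalently, its starter) and then to check the strong property (\ref{strong}) directly. The twelve orders split into the two residue classes $q\equiv 0\pmod 4$ (for $n\equiv 1\pmod 8$) and $q\equiv 1\pmod 4$ (for $n\equiv 3\pmod 8$), which is convenient because the classical Skolem-sequence constructions are organized along exactly these lines. For the small orders this is a short hand computation: indeed Example \ref{example of independence of properties} already records a strong Skolem starter in $\ZZ_{17}$, and Example \ref{skolem sequence} gives the $\ZZ_{11}$ starter, so only a few more are needed at the low end. For the larger orders, up to $q=28$ when $n=57$, I would either (i) begin from a classical Skolem sequence of the appropriate order and perform local exchanges of a handful of pairs — swaps that leave the difference multiset $\{\pm(x_i-y_i)\}$ equal to $\ZZ_n^*$ while eliminating collisions among the sums — or (ii) simply run a backtracking search over Skolem sequences of that order, retaining the first one that turns out to be strong. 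Either way, the end product is a table of twelve 2-partitions together with the routine verification that each yields all of $\ZZ_n^*$ as the set of signed differences and $q$ distinct nonzero residues as the set of sums.

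The main obstacle is precisely the strong condition at the larger orders: finding \emph{some} Skolem sequence is immediate, but the pair-sums of the ``obvious'' sequences tend to coincide modulo $n$, so one must either search the tree or design the sequence with the sum multiset in view. Hence the step I expect to require real care — and the one worth writing out — is a construction or modification scheme that controls the difference multiset and the sum multiset at the same time, handled separately for $q\equiv 0$ and $q\equiv 1\pmod 4$; once that is in place, the remaining small cases and all the verifications are mechanical. Since the range of $n$ is finite, a computer-assisted check of the twelve explicit starters is a legitimate and probably the most economical way to finish the argument.
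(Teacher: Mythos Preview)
The paper does not prove this theorem; it merely cites it from Shalaby's thesis \cite[pp.~60--62]{b09}. There is thus no ``paper's own proof'' to compare against --- the original proof consists of an explicit list of twelve strong Skolem starters, one for each admissible $n$, together with the mechanical verification of (\ref{starter}), (\ref{strong}) and (\ref{skolem}). Your diagnosis of the statement as a finite verification is exactly right, and your plan (start from a Skolem sequence, then adjust or search to kill sum-collisions) is the natural route and is surely what was done.

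That said, what you have written is a plan, not a proof. The entire content of the theorem is the data: the twelve explicit starters. You acknowledge this (``the end product is a table of twelve 2-partitions''), but you do not produce the table, and you defer the only nontrivial step --- actually controlling the sums at the larger orders --- to ``local exchanges'' or ``a backtracking search'' without carrying either out. For a result of this type, a proof that omits the explicit witnesses is no proof at all. To complete it you must list the twelve starters (two are already in the paper: $n=11$ in Example~\ref{skolem sequence} and $n=17$ in Example~\ref{example of independence of properties}; a starter for $n=27$ appears later in the paper as well) and display, or at least assert as checked, the $q$ distinct nonzero sums for each.
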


\begin{conjecture}\label{Shalaby conjecture} $($Shalaby, $1991\ \cite[p. 62]{b09}.)$  
Every $\mathbb{Z}_n$ with $n\equiv 1$ or $3\pmod{8}$ and $n\ge11,$ admits a strong  Skolem starter.
\end{conjecture}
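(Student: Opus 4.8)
The plan is to combine the product operations developed in this paper with a reduction to a sparse set of ``atomic'' orders, and then to attack those atoms by a mixture of explicit construction and a first-moment existence argument. Call an admissible order $n\equiv 1,3\pmod 8$ \emph{reducible} if $n=ab$ with $a,b\ge 3$ and both $a,b\equiv 1,3\pmod 8$, and \emph{atomic} otherwise. Since the residue set $\{1,3\}\pmod 8$ is closed under multiplication, the product of two admissible orders is again admissible, and the product results of this paper show that the corresponding product of two strong Skolem starters is again strong and Skolem. Hence every reducible admissible order inherits a strong Skolem starter from strong Skolem starters on its two smaller admissible factors, and an induction on the number of factors reduces Conjecture~\ref{Shalaby conjecture} to the single claim that \emph{every atomic admissible order $n\ge 11$ admits a strong Skolem starter}.

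The atoms are exactly those admissible $n$ with no factorization into two admissible factors $\ge 3$; they comprise all primes $p\equiv 1,3\pmod 8$ together with composites whose only factorizations use factors $\equiv 5,7\pmod 8$ (for instance $25=5\cdot 5$, $35=5\cdot 7$, $49=7\cdot 7$). Because there are infinitely many atomic primes, the product construction alone cannot finish, and a direct construction for the atoms is unavoidable. For atomic $n$ below some explicit bound $N_0$ I would verify existence by computer, extending Shalaby's range $11\le n\le 57$ of Theorem~\ref{Shalaby theorem} as far as the asymptotic argument requires. For atomic $n\ge N_0$ the plan is a first-moment argument: writing each Skolem starter as the pair set $\{\{i_k,i_k+k\}\}_{k=1}^q$ arising from a Skolem sequence, strongness fails precisely when two pair-sums coincide or one vanishes, that is, when $2(i_k-i_{k'})\equiv k'-k\pmod n$ for some $k\ne k'$, or $2i_k+k\equiv 0\pmod n$. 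One then bounds the number of Skolem starters carrying such a collision and shows it is smaller than the total number of Skolem sequences of order $q$, for which exponential lower bounds are available; any surviving Skolem starter is automatically strong.

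The hard part, and the point at which the whole program stands or falls, is controlling the pair-sum distribution of Skolem starters finely enough for the first-moment bound to win. Skolem sequences are highly rigid, so the collision events $2(i_k-i_{k'})\equiv k'-k\pmod n$ are far from independent, and a naive union bound over the $\binom{q}{2}$ potential collisions, each ``typically'' of probability $\sim 1/n$, yields an expected number of collisions of order $q$, which does not tend to zero. Making the count succeed therefore requires either a second-moment or concentration refinement, or, more robustly, an explicit interval-exchange switching on Skolem sequences that removes one colliding pair-sum without creating another, together with a quantitative lower bound on the number of independent switches available; this is the genuinely open step. As an alternative route for the prime atoms I would try to bypass probabilistic methods entirely by adapting the Mullin--Nemeth strong-starter construction, which exploits a primitive root of $\ZZ_p$, and then permuting the pair labels to enforce the Skolem difference condition $t_k-s_k\equiv k\pmod n$. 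Reconciling, for every admissible prime simultaneously, the multiplicative data that guarantees distinct sums with the additive-order data that guarantees the Skolem property is precisely where I expect the principal difficulty to lie.
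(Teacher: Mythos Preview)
This statement is Conjecture~\ref{Shalaby conjecture}, which the paper does \emph{not} prove; it is labeled a conjecture, and the paper's conclusion says only that the new results ``give further support'' to it. There is thus no proof in the paper to compare against, and what you have written is a program for an open problem rather than a proof --- indeed you say so yourself, identifying the first-moment bound over Skolem sequences (expected number of sum-collisions of order $q$, not $o(1)$) as ``the genuinely open step,'' with the proposed switching or second-moment repair left entirely unspecified.

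The reduction step also contains a concrete error. The product theorems of this paper do \emph{not} say that a product of two strong Skolem starters is strong Skolem: Theorem~\ref{intermediate result} requires the factor $T$ to be \emph{skew}, not merely strong, and the cardioidal product of Theorem~\ref{main1} requires $3\nmid m$. Since skew starters do not exist when $3\mid m$, your induction cannot handle a reducible $n$ all of whose admissible factorizations have both factors divisible by $3$ --- for instance $n=3^{6}=729\equiv 1\pmod 8$. Separately, strong Skolem starters do not exist in $\ZZ_3$ or $\ZZ_9$, so allowing factors $\ge 3$ rather than $\ge 11$ makes orders such as $33=3\cdot 11$ ``reducible'' in your sense while providing no usable smaller starter on the factor $3$; repairing this only enlarges the set of atoms, which is precisely the part you leave open.
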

Up to 2018, there were known only finitely many strong Skolem starters. In 2018, Ogandzhanyants et al. explicitly constructed an infinite family of strong Skolem starters \cite{b21}, proving the following 
\begin{theorem}\label{Ogandzhanyants theorem1}
Let $n=\prod_{i=1}^m p_i^{k_i}$, where $p_i>3,\ i=1,...,m$, are pairwise distinct primes such that $\ord_{p_i}(2)\equiv 2\pmod{4}$, and $k_i\in\NN,\ i=1,...,m$. Then $\ZZ_n$ admits a skew Skolem starter. 
\end{theorem}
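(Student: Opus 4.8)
The plan is to build the starter multiplicatively, using the "cardioidal" pair structure from part (d) of Definition \ref{strong, skew starters} as the engine. First I would recall from \cite{b21} that for a prime $p$ with $\ord_p(2)\equiv 2\pmod 4$, the cardioidal $2$-partition $C_p=\{\{i,2i\bmod p\}\mid i\in\ZZ_p^*\}$ (suitably interpreted so that each pair is listed once) is in fact a skew Skolem starter in $\ZZ_p$: the difference map $i\mapsto \pm i$ hits all of $\ZZ_p^*$ since the pairs are $\{i,2i\}$ with difference $i$, the Skolem property holds because cardioidal implies Skolem (stated in the excerpt), and the hypothesis on $\ord_p(2)$ is exactly what guarantees the sums $\pm 3i$ exhaust $\ZZ_p^*$, i.e.\ skewness. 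The real work is then to lift this from $p$ to $p^{k}$ and to combine across distinct primes.

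The key steps, in order, would be: (1) establish the prime power case $n=p^k$ directly, showing that the cardioidal $2$-partition $\{\{i,2i\bmod{p^k}\}\mid i\in\ZZ_{p^k}^*\}$ together with a separate treatment of the non-units (the elements divisible by $p$) assembles into a skew Skolem starter of order $p^k$; here one handles $\ZZ_{p^k}^*$ by the multiplicative-order argument and handles $p\ZZ_{p^k}\setminus\{0\}\cong p\cdot\ZZ_{p^{k-1}}$ by induction on $k$, checking that differences, sums, and the Skolem order condition all interact correctly across the two pieces. (2) Invoke (or, since the "Statements regarding the products of starters" are the stated goal of the paper, prove separately and then invoke) the product construction: given skew Skolem starters of coprime odd orders $n$ and $m$, the multi-valued product produces a skew Skolem starter of order $nm$. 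Since the $p_i^{k_i}$ are pairwise coprime, iterating the product over $i=1,\dots,m$ yields a skew Skolem starter in $\ZZ_n$. (3) Verify the base cases and the degenerate ranges ($m=1$, small exponents) so that the induction and the product both apply — note $p_i>3$ forces $p_i\ge 5$ so $\ord_{p_i}(2)\ge 2$ and the relevant starters are nontrivial.

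The main obstacle I anticipate is step (1), the prime-power lift: proving that sums remain distinct and nonzero modulo $p^k$ (not just modulo $p$) when one glues the unit-part cardioidal starter to the inductively constructed starter on the $p$-divisible part, and simultaneously keeping the Skolem order condition $(\ref{skolem})$ intact across that gluing. The subtlety is that the natural ring map $\ZZ_{p^k}\to\ZZ_p$ does not respect the linear order $1<2<\cdots<2q$ used in the Skolem definition, so the Skolem property cannot simply be pulled back; one must argue with the actual residues in $\{1,\dots,p^k-1\}$. A secondary difficulty is verifying that the paper's product operation genuinely preserves \emph{all three} properties (starter, strong/skew, Skolem) simultaneously under the coprimality hypothesis — the strong and starter parts are governed by a CRT-type splitting of differences and sums, but the Skolem order condition again requires care because the product order on $\ZZ_{nm}$ is not the product of the orders on $\ZZ_n$ and $\ZZ_m$. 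Once those two order-compatibility points are settled, the statement follows by a clean induction on the number of prime factors.
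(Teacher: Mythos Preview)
This theorem is not proved in the present paper at all: it is quoted from \cite{b21} as a previously established result, and the products developed here are used to \emph{extend} it (Theorems \ref{intermediate result} and \ref{main}), not to reprove it. So there is no in-paper proof to compare your proposal against.

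That said, your outline diverges from the argument in \cite{b21} in a way worth flagging. The proof there is a single uniform cardioidal construction on the full composite modulus $n$: since $n$ is odd and $3\nmid n$, multiplication by $2$ is a bijection of $\ZZ_n\setminus\{0\}$ and the pairing $\{i,2i\bmod n\}$ already covers \emph{all} nonzero residues, units and non-units alike. The hypothesis $\ord_{p_i}(2)\equiv 2\pmod 4$ for every prime factor is used globally, via the structure of the orbits of $2$ in $\ZZ_n$, to verify simultaneously the starter, skew, and Skolem conditions. There is no separate gluing of units to $p$-divisible elements and no induction on the number of prime factors. Your step (1), which treats ``$\ZZ_{p^k}^*$'' as the unit group and handles $p\ZZ_{p^k}$ separately by induction, is both unnecessary and at odds with the paper's notation (here $\ZZ_n^*$ denotes the nonzero elements, not the units); the cardioidal pairing needs no such splitting.

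Your step (2), using the product $W_{ST}$ to pass from prime-power orders to general $n$, is mathematically sound in light of Theorem \ref{intermediate result}, but it reverses the logical flow of the paper: the product machinery is developed here precisely because the direct cardioidal method of \cite{b21} already settles Theorem \ref{Ogandzhanyants theorem1}, and the goal now is to produce strong Skolem starters that are \emph{not} cardioidal. Invoking the product to recover the cardioidal family would be circular within this paper's narrative, even if it yields a valid alternative proof once the product theorems are established independently.
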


In addition, in was shown that all Skolem starters found in \cite{b21} are cardioidal starters, that is, they possess property (\ref{card}), and no strong cardioidal starter lies outside of the family fully described in \cite{b21}. The discovery in \cite{b21} boosted up the attention of some other researchers towards the proof of Conjecture \ref{Shalaby conjecture}, see for example \cite{b26} and the references therein. They explored alternative approaches to constructing strong Skolem starters, but no infinite family of strong Skolem starters other than strong cardioidal starters has been found. 

Theorems \ref{intermediate result} and \ref{main} stated and proved in this paper allow formation of new infinite families of strong (and skew) Skolem starters of composite orders, which are not cardioidal and thus significantly extends the previous result.  

Gross \cite[p.170]{b22} in 1974 indicated a way to produce a starter for the group $G\oplus H$, the direct sum of two finite abelian groups, given a starter for $H$ and a set of starters for $G$. He showed that under certain conditions, strong starters for $H$ and $G$ give rise to a strong starter for $G\oplus H$. Our constructions of the products given in Definitions \ref{WST} and \ref{dfn of cardioidal product} are inspired by that paper. However, in contrast with Gross who focused on the existence of strong starters and starters with adders in a general setting, our constructions are explicitly defined in cyclic groups $\ZZ_n$ as we are concerned with skew and strong Skolem starters. In addition, most of our statements have a converse. 

Our construction of products resembles the one, given by Turgeon in 1979 \cite{b28} for \textit{additive sequences of permutations}, in the general context of difference sets.  Indeed, Skolem starters could be treated as a very special case of difference sets.
However, in this paper we avoid  over-generalization and adapt the presentation specifically to our needs. Thus, we first apply the construction to  2-partitions in $\ZZ_n^*$ 
without any restrictions imposed on them.  Then we endow the 2-partitions with a certain property stated in Definition \ref{strong, skew starters}, apart from all other properties, 
and deduce the direct and inverse relationship of these properties for the resulting construction. Whereas  the direct relations may also be  concluded from the previous research on difference sets, the converse statements are our contribution to the topic.

The structure of this papers is as follows.

In Section 2, we introduce the notion of a product of a pair of  2-partitions of $\ZZ_{2p+1}^*$ and $\ZZ_{2q+1}^*$ respectively.

In Section 3, we focus our attention on starters and other special classes of 2-partitions.  Subsection 3.1 gives some preliminaries. 
In Subsection 3.2, we prove several  properties of the product of two 2-partitions and give an important intermediate result, Theorem \ref{intermediate result}. In Subsection \ref{The parametrized product of starters}, we generalize the initial definition of the product and compare its properties to the initial one. In Subsection \ref{Cardioidal product of two 2-partitions}, we show explicit ways to apply the products of Skolem starters.

In Section 4, we conclude with a discussion of several implications of the statements proved in this paper and give the main result of this paper, Theorem \ref{main}.

\section{The  product of  2-partitions: Construction}\label{section construction}

\begin{lemma}\label{orderx}
 Let $n=2q+1$, $q\ge 1$.  From any 2-partition $S=\{\{a_i,b_i\}\}^q_{i=1}$ of $\ZZ^*_n$, it is possible to make a set of ordered pairs $\bar{S}=\{(x_i,y_i)\}_{i=1}^q$, where either $x_i=a_i, y_i=b_i$ or  $x_i=b_i, y_i=a_i$, 
 for all $1\le i\le q$, such that 
\begin{equation}\label{order}
\cup^q_{i=1}\{\pm x_i\}=\ZZ_n^*. \quad 
\end{equation}
\end{lemma}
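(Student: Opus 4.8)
The plan is to recast the statement as the existence of a suitable system of distinct representatives. Since $n=2q+1$ is odd, no nonzero element of $\ZZ_n$ equals its own negative, so the involution $x\mapsto -x$ partitions $\ZZ_n^*$ into exactly $q$ unordered pairs $\{x,-x\}$; call this 2-partition $I$. Condition (\ref{order}) says precisely that, after choosing one element $x_i$ from each pair $\{a_i,b_i\}$ of $S$, the collection $\{x_1,\dots,x_q\}$ meets every pair of $I$; and since there are $q$ chosen elements and $q$ pairs in $I$, this is equivalent to $\{x_1,\dots,x_q\}$ being a transversal of $I$. Hence it suffices to show that from the 2-partition $S$ one can choose a transversal of the 2-partition $I$.

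To do this I would form the multigraph $H$ on vertex set $\ZZ_n^*$ whose edge multiset is the union of the $q$ pairs of $S$ and the $q$ pairs of $I$. Every vertex of $H$ lies in exactly one pair of $S$ and exactly one pair of $I$, so $H$ is $2$-regular and therefore decomposes into vertex-disjoint cycles; moreover, along any such cycle the edges must alternate between $S$-edges and $I$-edges (the two edges at a vertex are of different types), so every cycle has even length. A cycle of length $2$ occurs exactly when some pair $\{a_i,b_i\}$ of $S$ already has the form $\{x,-x\}$, i.e.\ coincides with a pair of $I$; this degenerate case is handled uniformly by the construction below and causes no trouble.

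In each even cycle I would then select one of its two independent sets of vertices --- equivalently, walk around the cycle and take every other vertex. Because the cycle has even length, the chosen set contains exactly one endpoint of each edge of the cycle, in particular exactly one endpoint of each $S$-edge and exactly one endpoint of each $I$-edge lying on it. Doing this in every cycle and letting $x_i$ be the selected endpoint of the $S$-pair $\{a_i,b_i\}$ (and $y_i$ the other one) yields ordered pairs $(x_i,y_i)$ of the required form such that $\{x_1,\dots,x_q\}$ hits each pair $\{x,-x\}$ of $I$ exactly once; hence $\bigcup_{i=1}^q\{\pm x_i\}$ is the union of all $q$ pairs of $I$, namely all of $\ZZ_n^*$, which is (\ref{order}).

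The only point that genuinely needs care --- and the place where the oddness of $n$ is used --- is the reduction in the first paragraph: one must observe that $I$ really is a 2-partition of $\ZZ_n^*$ into $q$ pairs, so that ``meeting every pair of $I$ with $q$ elements'' forces a transversal. After that, the cycle-decomposition argument is routine. One could alternatively invoke Hall's marriage theorem applied to the bipartite incidence between the pairs of $S$ and the pairs of $I$, but the alternating-cycle colouring gives a short, fully constructive proof, so I would present that.
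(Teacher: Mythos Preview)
Your proof is correct and is, at bottom, the same argument as the paper's, just recast in graph-theoretic language: the paper's chain-following procedure (order a pair as $(x_1,y_1)$, then set $x_{i+1}\equiv -y_i$ and continue until $-x_1$ appears) is exactly tracing one cycle of your multigraph $H$ and selecting one of its two colour classes, and the paper's separate treatment of pairs $\{a,-a\}$ corresponds to your length-$2$ multicycles. The SDR/Hall viewpoint you mention is a nice conceptual gloss, but the construction you actually carry out coincides with the paper's.
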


\begin{proof}
Let us denote by $\{a,b\}\mapsto (x,y)$ the operation of making  an ordered pair $(x,y)$ from an unordered pair $\{a,b\}$ by setting $x\equiv a,\ y\equiv b\pmod{n}$, then removing $\{a,b\}$ from $S$ and placing $(x,y)$ in $\bar S$.  Below we describe an explicit  construction of $\bar S$. 

First,  for each pair $\{a_i,b_i\}$, where $b_i\ \equiv -a_i\pmod{n}$, if exists,  set $\{a_i,-a_i\}\mapsto (x_i,y_i)$, and place it in the end of the list $\bar S$.  From all other pairs remaining  in $S$,
pick any pair, say, $\{a_1,b_1\}\in S$, and set $\{a_1,b_1\}\mapsto (x_1,y_1)$.  Then, find a pair which the element $-b_1$ belongs to. Without loss of generality (WLOG),  let $a_k\equiv-b_1\pmod{n}$.  Then set $\{-b_1,b_k\}\mapsto (x_2,y_2)$. Then, find a pair which the element $-b_k$ belongs to. WLOG,  let $a_j\equiv-b_k\pmod{n}$.  Then set $\{-b_k,b_j\}\mapsto (x_3,y_3)$.
 And so on, until the element $-a_1$ appears in some pair $\{a_m,b_m\}\in S$, which will produce  $(x_l,y_l)$, where $y_l\equiv -a_1\pmod{n}$. 

 Note, that by the construction  $x_2\equiv -y_1\pmod{n}$, $x_3\equiv -y_2\pmod{n}$, etc.
Clearly, such a collection of pairs spans over the subset  $\{x_1,y_1,-y_1,y_2,-y_2,...,-x_1\}\subset\ZZ_n^*$. 

Finally, pick any remaining pair in $S$, give it an order and continue the process until all the pairs are ordered.

\end{proof}

\begin{remark}
Note that for the set $\bar S$ described  in Lemma \ref{orderx}, we automaticaly have:
\begin{equation}\label{ordery}
\cup^q_{i=1}\{\pm y_i\}=\ZZ_n^*. \quad 
\end{equation}

\end{remark}

\begin{ex}
For $q=6$ and partition $\{ \{1,12\}; \{2,3\}; \{4,6\}; \{5,7\}; \{8,9\}; \{10,11\}\} $ we form $\bar S$ by making  the following 3 clusters:
$$
 \quad \bar S=\{ (2,3), (-3,-2), \quad (4,6), (-6,5), (-5,-4),\quad (1,-1)\pmod{13}\}
$$
with property  (\ref{order}), as required:
$$
\{\pm 2, \mp 3, \pm 4, \mp 6, \mp 5, \pm 1\pmod{13}\}= \ZZ_{13}^*.
$$
\end{ex}

Let  $S=\{\{a_i,b_i\}\}^q_{i=1}$  be a 2-partition of $\ZZ_{2q+1}^*$.
Denote by $\bar S$ a set  of $q$  {ordered} pairs of $S$, that obey property (\ref{order}):
\begin{equation}\label{SS}
 \bar S=\{(x_i,y_i)\}_{i=1}^q.
\end{equation}
The existence of these sets is sequred  by Lemma \ref{orderx}.
In addition, by $\bar S'$ we denote the set $\bar S'=\{(-x_i,-y_i)\}_{i=1}^q$ and
by $\tilde S$ we denote a set of arbitrary ordered pairs of $S$. 

\begin{df}\label{WST}
Given two 2-partitions, $S$ of $\ZZ_{2q+1}^*$ and $T$ of $\ZZ_{2p+1}^*$, let us form  sets of ordered pairs $\tilde{S}$, and $\bar{T},\ \bar{T'}$ as specified above. Consider the set  $W_{ST}=\{\{u_i,v_i\}\}^{k}_{i=1}$ of $k=2qp+ q+p$ pairs  of the form
\begin{equation}\label{uv}
\{u,v\},\quad  {\rm where}\quad u=(2q+1)r+x,\quad v=(2q+1)t+y
\end{equation}
 divided in the following  types:\\
(i) $(2p+1)q$ pairs: one for each  $(r,t)\in \bar{T}\cup \bar{T'}\cup \{(0,0)\}$ and for each $(x,y)\in \tilde S$ and\\
(ii)  $p$ pairs: one for each $(r,t)\in \bar{T}$ and $x=y=0$.\\

We will call the set of pairs $W_{ST}$ a product of $S$ and $T$.
\end{df}

\begin{ex}\label{ex1}

Let us construct the set $W_{ST}$ for 2-partitions $S=T=\{1,2\}$  of $\ZZ_3^*$.  Here 
$q=p=1$. 
Take $\tilde S= \bar T=\{(1,2)\}$ and $\bar T'=\{(2,1)\}$. 
So we have the pairs of the two types:\\
(i) $\{3\times0+1,3\times0+2\}=\{1,2\}$, $\{3\times 1+1,3\times 2+2\}=\{4,8\}$,
 $\{3\times 2+1,3\times 1+2\}=\{7,5\}$\\
(ii) $\{3\times 1+0,3\times 2+0\}=\{3,6\}$\\
The set of these four pairs constitutes $W_{ST}$.  

\end{ex}

\begin{remark}\label{starred types}
The pairs of $W_{ST}$ can be formed in various ways, depending on the choices  made in the process of constructing  $\tilde S$ and $\bar T$ from the 2-partitions $S$ and $T$. So, a product of the two 2-partitions is not unique. Nevertheless, the properties proven below hold for $W_{ST}$ regardless of the ordering choices. 

We will consider an alternative construction of a product in Sections 3.3 and 3.4. As well, we will outline more possibilities in Section \ref{Conclusion}.

\end{remark}

A product $W_{ST}$ of two 2-partitions, $S$ of $\ZZ_n^*$ and $T$ of $\ZZ_m^*$, as we will prove, preserves some properties of the factors. The most general one is given in Theorem \ref{W is a partition} below. Prior to that we recall the following simple results needed in the proofs. 

\begin{lemma}\label{modul}
	Let $m,n$ be any natural numbers and $a,b,c,d$ be integers. 
	\begin{enumerate}
		\item If $a\equiv b \pmod{mn}$ then $a\equiv b \pmod{n}$ and $a\equiv b \pmod{m}$.
		
		\item If $(an+c)\equiv (bn+c) \pmod{mn}$ then $a\equiv b \pmod{m}$.
		
		\item  If $(an+c)\equiv (bn+d) \pmod{n}$ then $c\equiv d \pmod{n}$.
		
		\item Let $X_c^n$ be a finite multiset of integers congruent modulo ${n}$ to a given integer $c$, not necessarily all distinct. If $|X_c^n|>m$ then there exist $b, d \in X_c^n$ such that $b=d \pmod {mn}$. 
		
	\end{enumerate}
\end{lemma}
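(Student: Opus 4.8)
The final statement to prove is Lemma~\ref{modul}, which collects four elementary number-theoretic facts about congruences modulo $mn$, $n$, and $m$. These are standard, so the proof should be short and routine; the value of writing it out carefully is in establishing exactly the phrasing that the later product constructions will invoke. I expect no serious obstacle — the only item requiring a genuine idea rather than a one-line deduction is part~4, which is a pigeonhole argument.

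\begin{proof}[Proof proposal]
For part~1, I would argue directly from the definition of congruence: if $mn \mid a-b$, then since $n \mid mn$ and $m \mid mn$, transitivity of divisibility gives $n \mid a-b$ and $m \mid a-b$. For part~2, note that $(an+c)-(bn+c) = (a-b)n$, so the hypothesis says $mn \mid (a-b)n$; cancelling the common factor $n$ (which is legitimate since $n$ is a natural number, hence nonzero) yields $m \mid a-b$, i.e. $a \equiv b \pmod m$. For part~3, observe that $(an+c)-(bn+d) = (a-b)n + (c-d)$; since $n \mid (a-b)n$ and by hypothesis $n \mid (an+c)-(bn+d)$, we get $n \mid (c-d)$, that is $c \equiv d \pmod n$.

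For part~4, this is the pigeonhole principle. Partition the integers congruent to $c$ modulo $n$ into residue classes modulo $mn$: every such integer lies in exactly one of the classes $c, c+n, c+2n, \dots, c+(m-1)n$ modulo $mn$, and there are exactly $m$ of these. Since $X_c^n$ is a multiset of size strictly greater than $m$, two of its members — call them $b$ and $d$ — must fall in the same class modulo $mn$, giving $b \equiv d \pmod{mn}$. (Strictly, if $X_c^n$ already contains a repeated value, we may take $b=d$ to be that repeated element; otherwise the $m+1$ or more distinct values cannot all be spread over $m$ classes.)

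The main ``obstacle,'' if any, is purely one of presentation: deciding how much detail is appropriate for facts this elementary. My inclination is to keep each of the four items to one or two sentences as above, since their only role is to be cited mechanically in the divisibility bookkeeping of Definition~\ref{WST} and the subsequent theorems, where the real content lies.
\end{proof}
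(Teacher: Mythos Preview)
Your proof is correct and matches the paper's own argument essentially line for line: parts~1--3 are the same direct divisibility deductions, and part~4 is the same pigeonhole (Dirichlet) argument counting the $m$ residue classes modulo $mn$ that are congruent to $c$ modulo $n$.
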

\begin{proof} 
	1. By definition, $a\equiv b \pmod{mn}$ means $(mn) | (a-b)$. But then $m | (a-b)$, so $a\equiv b \pmod{m}$, and $n | (a-b)$, so $a\equiv b \pmod{n}$.
	
	2.  Similarly, $(an+c)\equiv (bn+c) \pmod{mn}$ means $(mn)| ((a-b)n)$. Then $m | (a-b)$, so $a\equiv b \pmod{m}$.
	
	3. As well, $(an+c)\equiv (bn+d) \pmod{n}$ means $n|((a-b)n+(c-d))$. Then $n | (c-d)$, so $c\equiv d \pmod{n}$.
	
	4. For any integer $0\le c< mn$ there are exactly $m$ numbers $0\le a < mn$ congruent modulo $n$ to $c$. Thus, for any multiset $X_c^n$  of more that $m$ integers, the Dirichlet principle implies the existence of  $b=d \pmod {mn}$.
\end{proof}

\begin{theorem}\label{W is a partition} Let $n,m\ge 3$ be odd integers and $S$ and $T$ be 2-partitions of $\ZZ_n^*$ and $\ZZ_m^*$ respectively. Their product $W_{ST}$ (Definition \ref{WST}) is a 2-partition of $\ZZ_{mn}^*$.
\end{theorem}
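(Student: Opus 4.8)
\medskip
\noindent\textbf{Proof plan.} The plan is a counting argument. Since $W_{ST}$ has $k=2qp+q+p$ pairs, its $2k$ endpoints carry $2k=4qp+2q+2p=(2q+1)(2p+1)-1=mn-1=|\ZZ_{mn}^*|$ residues, counted with multiplicity. Hence it suffices to show that these $2k$ endpoints are pairwise incongruent modulo $mn$ and all nonzero modulo $mn$; then they enumerate $\ZZ_{mn}^*$, each element used exactly once, which is exactly the assertion that $W_{ST}$ is a $2$-partition of $\ZZ_{mn}^*$ (in particular the $k$ pairs are then automatically distinct genuine $2$-element sets). Throughout I fix representatives of $\ZZ_n^*$ in $\{1,\dots,2q\}$ and of $\ZZ_m^*$ in $\{1,\dots,2p\}$, and record three bookkeeping facts: (a) in $\tilde S=\{(x_i,y_i)\}_{i=1}^q$ the first coordinates $\{x_i\}$ and the second coordinates $\{y_i\}$ form complementary halves of $\ZZ_n^*$, so they are disjoint and any coordinate occurs in exactly one pair in exactly one position; (b) since $\bar T=\{(r_j,t_j)\}_{j=1}^p$ obeys (\ref{order}) and $m$ is odd (so $r_j\not\equiv -r_j\pmod m$), the sets $\{r_j\}$ and $\{-r_j\}$ partition $\ZZ_m^*$, hence the first coordinates of the $2p+1$ pairs of $\bar T\cup\bar T'\cup\{(0,0)\}$ run through $\ZZ_m$ exactly once each, and by the Remark following Lemma~\ref{orderx} the same holds for their second coordinates; (c) the multiset $\bigcup_{j=1}^p\{r_j,t_j\}$ equals $\ZZ_m^*$ with each element once, because $(r_j,t_j)$ orders the $j$-th pair of the partition $T$.

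\smallskip
\emph{No endpoint is $0$.} A type (i) endpoint has the form $u=nr+x$ with $x\in\ZZ_n^*$, so $u\equiv x\not\equiv 0\pmod n$ and hence $u\not\equiv 0\pmod{mn}$ by Lemma~\ref{modul}(1); likewise for $v=nt+y$. A type (ii) endpoint has the form $n\rho$ with $\rho\in\{r_j,t_j\}$; since $\rho\not\equiv 0\pmod m$ we get $mn\nmid n\rho$, i.e. $n\rho\not\equiv 0\pmod{mn}$.

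\smallskip
\emph{Distinctness.} Reducing modulo $n$ separates the two types, since every type (i) endpoint is $\not\equiv 0\pmod n$ while every type (ii) endpoint is $\equiv 0\pmod n$. For two type (ii) endpoints $n\rho,n\rho'$: if $n\rho\equiv n\rho'\pmod{mn}$ then $\rho\equiv\rho'\pmod m$ by Lemma~\ref{modul}(2), which by (c) forces $\rho,\rho'$ to be the same entry, hence the same endpoint. Now let $nr+\xi\equiv nr'+\xi'\pmod{mn}$ be two type (i) endpoints, where $\xi,\xi'$ are the $\tilde S$-coordinates occurring. Lemma~\ref{modul}(1) gives $\xi\equiv\xi'\pmod n$, so $\xi=\xi'$ in our representatives. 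By (a) this common value lies in exactly one of the two disjoint coordinate sets of $\tilde S$, so both endpoints are ``left'' endpoints ($=nr+x$) or both are ``right'' endpoints ($=nt+y$) — in particular a left endpoint of one product pair can never equal a right endpoint of another — and in either case they are built on one and the same pair of $\tilde S$. Cancelling $\xi=\xi'$ leaves $nr\equiv nr'\pmod{mn}$ (resp. $nt\equiv nt'\pmod{mn}$), so $r\equiv r'\pmod m$ (resp. $t\equiv t'\pmod m$) by Lemma~\ref{modul}(2), and by (b) the corresponding pairs of $\bar T\cup\bar T'\cup\{(0,0)\}$ coincide. Hence the two endpoints are the same. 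This exhausts all cases, so the $2k$ endpoints are distinct and nonzero modulo $mn$; being $mn-1$ in number they are precisely $\ZZ_{mn}^*$, and $W_{ST}$ is a $2$-partition of $\ZZ_{mn}^*$.

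\smallskip
\noindent The step I expect to be most delicate is the middle of the distinctness argument: ruling out that a ``left'' endpoint of one product pair equals a ``right'' endpoint of a different product pair. This is exactly where the disjointness of the first- and second-coordinate sets of $\tilde S$ (fact (a)) is indispensable. Everything else is the routine two-stage descent — first modulo $n$ to pin down the factor coming from $S$, then modulo $m$ via Lemma~\ref{modul}(2) to pin down the factor coming from $T$ — combined with the identity $2k=mn-1$.
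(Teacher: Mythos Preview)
Your proof is correct and follows essentially the same strategy as the paper: count endpoints to get $mn-1$, then prove they are pairwise distinct and nonzero modulo $mn$ by reducing first modulo $n$ (to identify the $S$-coordinate) and then modulo $m$ via Lemma~\ref{modul}(2) (to identify the $T$-coordinate). Your explicit formulation of fact (a) --- the disjointness of the first- and second-coordinate sets of $\tilde S$ --- makes transparent the step the paper handles only with the phrase ``similar argument,'' namely ruling out $u_1\equiv v_2\pmod{mn}$.
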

\begin{proof} 

Let $n=2p+1$ and $m=2q+1$, $p,q\ge 1$.  Let us also establish the natural order in $\ZZ_k:\ 0<1<...<k-1,\ k\in \NN$. 
By definition,  $W_{ST}$ consists of  $2pq+p+q$ pairs, totaling to $4pq+2p+2q=mn-1$ elements, which equals the cardinality of  $\ZZ_{mn}^*$. It remains to show that all these elements of $W_{ST}$ are distinct modulo $mn$. Indeed, all the elements of the pairs of type (ii) are distinct as $T$ is a 2-partition of $\ZZ_m^*$, and they are multiples of $n$. 
All the elements of the pairs for $r=t=0$ and $(x,y)\in \tilde S$ are distinct and  less than $n$ because $S$ is a 2-partition of $\ZZ_n^*$.  All the remaining elements of the pairs of type (i) are greater than $n$ and are not multiples of $n$. Assume for the sake of contradiction the possibility that among them there is a pair $\{u_1,v_1\}$ and a pair $\{u_2,v_2\}$ with a non-empty intersection. Here 
\begin{equation}\label{u1v1}
u_1=r_1n+x_1,\, v_1=t_1n+y_1 \quad u_2=r_2n+x_2,\, v_2= t_2n+y_2,
\end{equation}
and  WLOG,  we let $(x_i,y_i)\in\tilde{S},\ i=1,2,\ (r_1,t_1)\in\bar{T},\ (r_2,t_2)\in\bar{T'}$.

Let for example, $u_1\equiv u_2\pmod{mn}$, that is $(r_1n+x_1)\equiv (r_2n+x_2)\pmod{mn}$. Then, by Lemma \ref{modul}(1\&3), $x_1\equiv x_2\pmod{n}$. Therefore, by Lemma \ref{modul}(2), $r_1\equiv r_2\pmod{m}$. But this is impossible due to property (\ref{order}). Similar argument will lead to a contradiction if one assumes  $v_1\equiv v_2\pmod{mn}$ or $u_1\equiv v_2\pmod{mn}$ or $u_2\equiv v_1\pmod{mn}$.

 This proves that all $4pq+2p+2q=mn-1$ elements appeared in the pairs  of $W_{ST}$ are distinct. Therefore $W_{ST}$ is a 2-partition of $\ZZ^*_{mn}$.
\end{proof}

\section{The product of special classes of 2-partitions}
\subsection{Preliminaries}
The 2-partitions of $\ZZ_n^*$ we mainly concern with are strong and skew Skolem starters. Lemma \ref{orderx} applies to starters in $\ZZ_{2q+1} $ as they form a  2-partition of $\ZZ^*_{2q+1}.$

Before we get to the properties of the product of two starters, we present some additional definitions and a lemma which will be helpful in the sequel.
\begin{df}\label{defn of pairs}
 A pair $\{x,y\}\in S$ is called a canonical pair of order $k$ if $\{x,y\}=\{i,-i\pmod{k}\}$ for some $i\in\ZZ_k^*$. If all pairs of $S$ are canonical, then $S$ is called a canonical starter of order $k$.

\end{df}
 

\begin{df} Two 2-partitions S and S' in the same group are called conjugate if $\{x,\ y\}\in S$ implies $\{-x,\ -y\}\in S'$.
\end{df}

Obviously, every 2-partition has a conjugate. Note that the 2-partition of $\ZZ_n^*$ which is a canonical starter is always conjugate to itself. Moreover, a starter is canonical if and only if it is \textit{self-conjugate}.



The  following properties of conjugate 2-partitions are rather trivial as each of them follows immediately from the definitions of their counterparts, but very important:

\begin{lemma}
 If a 2-partition is either a starter, or canonical, or strong, or skew, or Skolem, or cardioidal, so is its conjugate.
\end{lemma}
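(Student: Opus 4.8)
The plan is to verify each of the six listed properties separately, in each case showing that if $S$ satisfies the defining equation then so does its conjugate $S'$. Recall that by definition $\{x,y\}\in S$ implies $\{-x,-y\}\in S'$, and since both are 2-partitions of the same $\ZZ_n^*$ with the same number of pairs, the pairs of $S'$ are exactly the sets $\{-x,-y\}$ for $\{x,y\}\in S$. So every statement about sums or differences of pairs of $S'$ translates directly into a statement about sums or differences of pairs of $S$, up to an overall sign.

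First I would handle the starter property (\ref{starter}): for a pair $\{-x,-y\}\in S'$ the difference is $(-x)-(-y)=-(x-y)$, so $\pm((-x)-(-y))=\pm(x-y)$, and hence the multiset $\{\pm((-x)-(-y))\}$ over pairs of $S'$ equals $\{\pm(x-y)\}$ over pairs of $S$, which is $\ZZ_n^*$ by hypothesis. The strong property (\ref{strong}) is equally immediate: $(-x)+(-y)=-(x+y)$, and since $x\mapsto -x$ is a bijection of $\ZZ_n^*$ onto itself fixing $0$, the image set $\hat{S'}=\{-(x+y)\}$ is a subset of $\ZZ_n^*$ of the same cardinality $q$ as $\hat S$. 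The skew property (\ref{skew}) is similar, using $\pm((-x)+(-y))=\pm(x+y)$. For the canonical property, if $\{x,y\}=\{i,-i\}$ then $\{-x,-y\}=\{-i,i\}=\{i,-i\}$, so canonical pairs are literally fixed; thus $S'=S$ is canonical (this also re-proves the remark that canonical starters are self-conjugate).

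The two remaining properties, Skolem (\ref{skolem}) and cardioidal (\ref{card}), are the only ones requiring a small amount of care, and I expect the Skolem case to be the main (mild) obstacle, because negation does not behave as simply with respect to the total order $1<2<\dots<2q$ as it does with addition. Here I would use the observation that $-i \pmod n$ equals $n-i = 2q+1-i$, so negation reverses the order: $x<y$ iff $-x>-y$ (for nonzero residues). For a pair $\{x,y\}\in S$ with, say, $y>x$ and $y-x\le q$, the conjugate pair $\{-x,-y\}$ has $-x>-y$, and its "difference in the Skolem sense" is $(-x)-(-y)=y-x\le q$; so condition (\ref{skolem}), namely (difference of the two representatives in the right order) $\le q$ iff (the designated one is larger), holds for $\{-x,-y\}$ exactly because it holds for $\{x,y\}$. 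For the cardioidal property, if $\{x,y\}=\{i,2i\pmod n\}$ for some $i\in\ZZ_n^*$, then $\{-x,-y\}=\{-i,-2i\}=\{j,2j\pmod n\}$ with $j=-i\in\ZZ_n^*$, so the conjugate pair is again cardioidal.

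Finally I would remark that each of these six verifications is symmetric in $S$ and $S'$ (conjugation is an involution on 2-partitions), so the statement is really a biconditional, though only the stated direction is needed. Assembling the six cases completes the proof; no step involves more than substituting $x\mapsto -x$ and tracking a sign, so the write-up should be only a few lines per property.
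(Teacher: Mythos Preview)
Your proposal is correct and follows exactly the approach the paper indicates: the paper does not give an explicit proof but simply states that each property ``follows immediately from the definitions of their counterparts,'' and your write-up spells out precisely those immediate verifications property by property. There is nothing to add or correct.
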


\subsection{Properties of the product of two 2-patitions}\label{the properties of two 2-patitions} 

 In Example \ref{ex1}, the two 2-partitions we use are starters in $\ZZ_3$. (We have no choice as the only 2-partition of $\ZZ_3^*$ is a starter in $\ZZ_3$). And their product turns out to be a starter in $\ZZ_{3\cdot 3}=\ZZ_9$.

Consider the product of  two starters from different groups.

\begin{ex} \label{ex2}

Let us construct the set $W_{ST}$ for starters $S=\{\{1,4\},\{2,3\}\}$  in $\ZZ_5$ and $T=\{1,2\}$  in $\ZZ_3$.  In this case $n=5, m=3$ and $q=2, p=1$. 

Take $\tilde S=\{(1,4), (2,3)\}$, $\bar T=\{(1,2)\}$, $\bar T'=\{(2,1)\}$.

Then we have the pairs of the two types:\\
(i) $\{5\times0+1,5\times0+4\}=\{1,4\},\ \{5\times0+2,5\times0+3\}=\{2,3\}$

 $\{5\times 1+1,5\times 2+4\}= \{  6, 14\}   $, $\{5\times 1+2,5\times 2+3\}= \{7,13\}   $  

$\{5\times 2+1,5\times 1+4\}= \{  11, 9\}   $, $\{5\times 2+2,5\times 1+3\}= \{12,8\}   $\\
(ii) $\{5\times 1+0, 5\times 2+0\}=\{5,10\}$\\
The set of these seven pairs constitutes $W_{ST}$. In fact, $W_{ST}$ is a starter in $\ZZ_{3\cdot 5}=\ZZ_{15}$.

\end{ex}
 This is not coincidental. A product of two starters is a starter. It turns out that the converse is true as well, that is, if $W_{ST}$ is a starter, then both $S$ and $T$ are starters. The following theorem secures this property.

\begin{theorem}\label{W is a starter} Let $n,m\ge 3$ be odd integers and $S$ and $T$ be 2-partitions of $\ZZ_n^*$ and $\ZZ_m^*$ respectively. Their product $W_{ST}$ (Definition \ref{WST}) is a starter in $\ZZ_{mn}$ if and only if $S$ is a starter in $\ZZ_n$ and $T$ is a starter in $\ZZ_m$.
\end{theorem}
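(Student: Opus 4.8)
The plan is to analyze the multiset of differences $\pm(u_i - v_i) \pmod{mn}$ arising from the pairs of $W_{ST}$ and to show it equals $\ZZ_{mn}^*$ precisely when the corresponding difference conditions hold for both $S$ and $T$. Recall from Theorem \ref{W is a partition} that $W_{ST}$ is always a 2-partition of $\ZZ_{mn}^*$, so the issue is purely whether the differences cover $\ZZ_{mn}^*$; since $W_{ST}$ has exactly $(mn-1)/2$ pairs, coverage is equivalent to all differences being distinct and nonzero modulo $mn$. I would compute the difference of a generic pair $\{u,v\}$ with $u = nr + x$, $v = nt + y$: its difference modulo $mn$ is $n(r-t) + (x-y)$. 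For type (ii) pairs ($x=y=0$, $(r,t)\in\bar T$) the difference is $n(r-t) \pmod{mn}$; for type (i) pairs with $(r,t)=(0,0)$ it is $x-y \pmod n$ viewed in $\ZZ_{mn}$; and for the remaining type (i) pairs it is $n(r-t)+(x-y)$ with $(r,t)\in\bar T\cup\bar T'$ and $(x,y)\in\tilde S$.

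The forward direction (if $S,T$ are starters then $W_{ST}$ is) is the ``direct'' relation that, as the authors note, parallels the classical difference-set constructions. I would argue it by a counting/partitioning argument on residues modulo $n$: using Lemma \ref{modul}(3), two differences $n(r_1-t_1)+(x_1-y_1)$ and $n(r_2-t_2)+(x_2-y_2)$ that are congruent modulo $mn$ must first be congruent modulo $n$, forcing $x_1-y_1 \equiv x_2-y_2 \pmod n$; since $S$ is a starter, the differences $\pm(x_i-y_i)$ run over $\ZZ_n^*$ without repetition, and together with the "zero" class (handled by type (ii) pairs and the $(0,0)$ block) this pins down which $x$-block we are in, after which Lemma \ref{modul}(2) together with property (\ref{order}) for $\bar T$ (and the fact that $T$ is a starter, so the sums $r-t$ behave correctly) forces the $(r,t)$-data to coincide. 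One has to be careful bookkeeping the three types and the $\pm$ signs — in particular that $\bar{T'}$ contributes the negated differences — but each sub-case reduces to Lemma \ref{modul} plus the defining properties of $\bar T$ and the starters $S$, $T$.

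The converse direction is the novel part and I expect it to be the main obstacle. The strategy is contrapositive: if $S$ is not a starter, then two of its pairs yield the same $\pm$-difference modulo $n$, say $\pm(x_i-y_i)\equiv\pm(x_j-y_j)\pmod n$; I would then exhibit two pairs of $W_{ST}$ (taking $(r,t)=(0,0)$, i.e. the embedded copy of $S$) whose differences collide modulo $mn$ — this is immediate since those differences are literally $x-y$ taken in $\ZZ_{mn}$ and already collide modulo $n$, hence are either equal or negatives modulo $mn$ once we note they lie in the range $(-n,n)$. Wait — more care is needed: a collision mod $n$ among small residues need not be a collision mod $mn$, so instead I would use the pairs indexed by a common nonzero $(r,t)$ block to amplify: if $\pm(x_i-y_i)=\pm(x_j-y_j)\pmod n$ with $\{x_i,y_i\}\neq\{x_j,y_j\}$, pick any $(r,t)\in\bar T$ and compare the two type-(i) pairs $\{nr+x_i, nt+y_i\}$ and $\{nr+x_j,nt+y_j\}$: their differences are $n(r-t)+(x_i-y_i)$ and $n(r-t)+(x_j-y_j)$, which are congruent or anti-congruent mod $n$, and a short argument with Lemma \ref{modul} shows one can force an actual mod-$mn$ coincidence (or a coincidence of a pair's difference with zero), contradicting that $W_{ST}$ is a starter. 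Symmetrically, if $T$ is not a starter, two values $r-t$ repeat up to sign among its pairs, and the type-(ii) pairs $\{n r, nt\}$ of $W_{ST}$ (which have difference $n(r-t)$) then collide modulo $mn$, again a contradiction. Assembling these, $W_{ST}$ being a starter forces both $S$ and $T$ to be starters, completing the equivalence.
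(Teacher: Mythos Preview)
Your forward direction and your treatment of the case ``$T$ not a starter'' in the converse are essentially the paper's argument: reduce a putative collision of differences modulo $mn$ first modulo $n$ via Lemma~\ref{modul}(1,3) to pin down the $S$-pair, then modulo $m$ via Lemma~\ref{modul}(2) to pin down the $(r,t)$-data; and for $T$ not a starter, two type-(ii) pairs $\{r_1n,t_1n\}$, $\{r_2n,t_2n\}$ visibly collide.

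The gap is in your converse for ``$S$ not a starter.'' You correctly flag the issue --- a collision of $x_i-y_i$ and $x_j-y_j$ modulo $n$ need not be a collision modulo $mn$ --- but your attempted fix (take the two pairs over a common $(r,t)\in\bar T$) does not resolve it: the two differences are $n(r-t)+(x_i-y_i)$ and $n(r-t)+(x_j-y_j)$, and their gap is still the \emph{integer} $(x_i-y_i)-(x_j-y_j)$, which can be $\pm n$ rather than $0$ (e.g.\ $x_i-y_i=3$ and $x_j-y_j=3-n$). In that case neither $D_1\equiv D_2$ nor $D_1\equiv -D_2$ holds modulo $mn$ for your chosen pairs, so no single choice of $(r,t)$ exhibits the collision, and your ``short argument with Lemma~\ref{modul}'' is not specified.

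The paper closes this gap by a counting argument rather than a direct exhibition: it collects all $2m$ type-(i) pairs of $W_{ST}$ built from $\{x_1,y_1\}$ and $\{x_2,y_2\}$ (one for each $(r,t)\in\bar T\cup\bar T'\cup\{(0,0)\}$ and each of the two $S$-pairs). All $2m$ of their differences lie in the single residue class $c\equiv x_1-y_1\pmod n$ (after choosing signs consistently), and since $\ZZ_{mn}$ contains only $m$ elements in that class, Lemma~\ref{modul}(4) forces two of these differences to coincide modulo $mn$. That pigeonhole step is the missing idea in your proposal.
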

\begin{proof} 

(a) Sufficiency.

Let $S$ be a starter in $\ZZ_n$ and $T$ be a starter in $\ZZ_m$. In order to prove that $W_{ST}$ is a starter in $\ZZ_{mn}$,
we need to show that $W_{ST}$ is a partition of $\ZZ^*_{mn}$ into  pairs $\{\{u_i,v_i\}\}_{i=1}^{(mn-1)/2}$ such that
\begin{equation}\label{starter's differences}
\{\pm(u_i-v_i)\pmod{mn}|\{u_i,v_i\}\in W_{ST}\}=\ZZ^*_{mn}.
\end{equation}
Now, let us look at the differences $\pm( u_k-v_k)\pmod{mn}$, $1\le k\le \frac{mn-1}2$.

Since $T$ is a starter in $\ZZ_m$, the pairs of type (ii) make all possible $m-1$ differences of the form $n\Delta$, where $\Delta\in \ZZ_m^*$.

Consider two distinct pairs $\{u_1,v_1\}$ and $ \{u_2,v_2\}$ of type (i). Suppose, for the sake of contradiction, that $(u_1-v_1)\equiv (u_2-v_2) \pmod{mn}$.  

 Using notation (\ref{u1v1}), we have 
\begin{equation}\label{W starter1}
[(r_1 n+x_1)-(t_1n+y_1)] \equiv [(r_2n+x_2)- (t_2n+y_2)]\pmod{mn}.
\end{equation}

By Lemma \ref{modul}(1), equation (\ref{W starter1}) implies $$[(r_1 n+x_1)-(t_1n+y_1)] \equiv  [(r_2n+x_2)- (t_2n+y_2)]\pmod{n}.$$
Then, by Lemma \ref{modul}(3), we obtain $$(x_1-y_1) \equiv (x_2- y_2)\pmod{n}.$$
Since $S$ is a starter in $\ZZ_n$, it is possible  if and only if $\{x_1,y_1\}= \{x_2,y_2\}$.  
WLOG, assume that this pair is ordered by $(x_1,y_1)= (x_2,y_2)=(x,y)\in \bar{S}$. We have
\begin{equation}\label{W starter2}
((r_1 n+x)-(t_1n+y)) \equiv  ((r_2n+x)- (t_2n+y))\pmod{mn}.
\end{equation}

By Lemma \ref{modul}(2), equation (\ref{W starter2}) implies $(r_1-t_1)\equiv (r_2- t_2)\pmod{m}$. Since $T$ is a starter in $\ZZ_m$, it is possible  if and only if  $r_1=r_2$ and $t_1=t_2$, which contradicts our assumption that  $\{u_1,v_1\}$ and $ \{u_2,v_2\}$ are  distinct pairs. 

\smallskip

(b) Necessity.

Suppose that at least one of the 2-partitions $S$ and $T$ is not a starter of the corresponding group. Then to show that $W_{ST}$ is not a starter, it suffices to find at least two pairs of $W_{ST}$ which produce the same differences.

If $T$ is not a starter then it contains at least two pairs $\{r_1,t_1\},\ \{r_2,t_2\}$ such that $\{\pm(r_1-t_1)\}=\{\pm(r_2-t_2)\} \pmod{m}$. Consequently, two pairs $\{r_1n,t_1n\},\ \{r_2n,t_2n\}$ in $W_{ST}$ of type (ii) will yield the same differences modulo $mn$.

If $S$ is not a starter then it contains at least two pairs $\{x_1,y_1\},\ \{x_2,y_2\}$ such that $\{\pm(x_1-y_1)\}=\{\pm(x_2-y_2)\}\pmod{n}$. Then there are $2m$ pairs in $W_{ST}$ of types (i), which produce differences congruent to $\pm (x_1-y_1)$ modulo $n$. They are $\{x_1,y_1\},\ \{x_2,y_2\}$, $\{r_i n+x_1,t_i n+y_1\},\ \{r_in+x_2,t_in+y_2\}$, $\{-r_i n+x_1,-t_i n+y_1\},\ \{-r_in+x_2,-t_in+y_2\}$, $1\le i\le p$.

Hence, by Lemma \ref{modul} (4), we conclude that there are two pairs among these $2m$ pairs that satisfy $\{\pm(u_1-v_1)\}=\{\pm(u_2-v_2)\}\pmod{mn}$. So,
(\ref{starter's differences}) is impossible, which means $W_{ST}$ is not a starter in $\ZZ_{mn}$.
This completes the proof of necessity. 

\end{proof}

Next statement clarifies the conditions for obtaining a strong 2-partition.

\begin{theorem}\label{W is strong} Let $n,m\ge 3$ be odd integers and $S$ and $T$ be 2-partitions of $\ZZ_n^*$ and $\ZZ_m^*$ respectively. Then their product $W_{ST}$ (Definition \ref{WST}) is a strong 2-partition of $\ZZ_{mn}^*$ if and only if $S$ is strong and $T$ is skew. 
\end{theorem}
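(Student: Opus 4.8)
The strategy is to study the multiset of sums of $W_{ST}$ through its reduction modulo $n$. Write $n=2p+1$ and $m=2q+1$, so $S$ has $p$ pairs and $T$ has $q$ pairs, and recall that every pair of $W_{ST}$ has the form $\{rn+x,\,tn+y\}$; a pair of type (ii) has sum $\equiv 0\pmod n$, while a pair of type (i) built from $\{x,y\}\in S$ has sum $\equiv x+y\pmod n$ regardless of the chosen $(r,t)$. Three structural facts drive everything. First, each fibre of the reduction $\ZZ_{mn}\to\ZZ_n$ has exactly $m$ elements. Second, the index set $R:=\bar T\cup\bar T'\cup\{(0,0)\}$ has exactly $m$ elements (the orderings of Lemma \ref{orderx} make $\bar T$ and $\bar T'$ disjoint), so each pair of $S$ spawns exactly $m$ pairs of $W_{ST}$ of type (i). Third, the map $\sigma\colon R\to\ZZ_m$, $(r,t)\mapsto (r+t)\bmod m$, carries $\bar T$ onto the set $C$ of pair-sums of $T$ taken mod $m$, carries $\bar T'$ onto $-C$, and sends $(0,0)$ to $0$; hence $\sigma$ is a bijection if and only if $\ZZ_m=C\sqcup(-C)\sqcup\{0\}$, which is exactly the statement that $T$ is skew.

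For sufficiency, assume $S$ strong and $T$ skew. By Theorem \ref{W is a partition}, $W_{ST}$ is a 2-partition of $\ZZ_{mn}^*$ with $(mn-1)/2$ pairs, so it suffices to show that all its sums are nonzero and pairwise distinct modulo $mn$. Nonzero-ness is immediate from Lemma \ref{modul}(1): a type-(ii) sum $n(r+t)$ is $\equiv 0\pmod{mn}$ only if $r+t\equiv 0\pmod m$, excluded since $T$ is strong, and a type-(i) sum is $\equiv x+y\not\equiv 0\pmod n$, excluded since $S$ is strong. For distinctness I would split into three cases: a type-(i) and a type-(ii) sum already differ mod $n$; two type-(ii) sums equal mod $mn$ force $r_1+t_1\equiv r_2+t_2\pmod m$ by Lemma \ref{modul}(2), hence the same pair of $T$ ($T$ strong); two type-(i) sums equal mod $mn$ force, reducing mod $n$ and using that $S$ is strong, the same pair of $S$, whereupon cancelling the common integer $x+y$ and applying Lemma \ref{modul}(2) gives $\sigma(r_1,t_1)=\sigma(r_2,t_2)$, hence $(r_1,t_1)=(r_2,t_2)$ since $\sigma$ is a bijection, so the two pairs coincide.

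For necessity, assume $W_{ST}$ is strong, that is, the sum map on the pairs of $W_{ST}$ is injective with image inside $\ZZ_{mn}^*$; the argument is a fibre-by-fibre pigeonhole, which is precisely what Lemma \ref{modul}(4) supplies. In the fibre over $0\in\ZZ_n$ sit the $q$ type-(ii) sums; were some pair of $S$ to sum to $0\pmod n$ it would place a further $m$ sums there, giving $q+m>m$ sums in an $m$-element fibre, impossible, so no pair of $S$ sums to $0\pmod n$; moreover the $q$ type-(ii) sums are then distinct and nonzero mod $mn$, which by Lemma \ref{modul} means $T$ has $q$ distinct nonzero pair-sums mod $m$, i.e. $T$ is strong. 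In a fibre over $c\neq 0$, two distinct pairs of $S$ with equal sum mod $n$ would place $2m>m$ type-(i) sums there, again impossible; hence all pair-sums of $S$ are distinct mod $n$, and with the previous step $S$ is strong. Finally, fix $\{x,y\}\in S$ and put $c=(x+y)\bmod n\neq 0$: the $m$ type-(i) pairs it spawns have distinct sums, all in the $m$-element fibre over $c$, so they exhaust it; writing $x+y=n\varepsilon+c$ with $\varepsilon\in\{0,1\}$, these sums equal $\big((r+t+\varepsilon)\bmod m\big)n+c$, so $(r+t)\bmod m$ runs over all of $\ZZ_m$ as $(r,t)$ ranges over $R$, i.e. $\sigma$ is surjective, hence bijective, hence $T$ is skew.

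The one place that genuinely needs care is the "$\varepsilon$-shift": because $x+y$ may exceed $n$, a type-(i) sum lands in the fibre over $c=(x+y)\bmod n$ as $\big((r+t+\varepsilon)\bmod m\big)n+c$ rather than $\big((r+t)\bmod m\big)n+c$, and one must note that translation by the fixed constant $\varepsilon$ permutes $\ZZ_m$, so it affects neither surjectivity in the necessity argument nor distinctness in the sufficiency argument. A secondary bookkeeping point is that every coincidence of sums modulo $mn$ must be attributed to two genuinely different pairs of $W_{ST}$; this is guaranteed by Theorem \ref{W is a partition} together with the observation that distinct elements of $R$ (and distinct ordered pairs of $\tilde S$) index distinct pairs of $W_{ST}$. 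Everything else reduces to routine applications of Lemma \ref{modul}.
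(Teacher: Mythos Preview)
Your proof is correct and follows essentially the same route as the paper's: reduce pair-sums modulo $n$ to isolate the contribution of $S$, then modulo $m$ to isolate that of $T$, invoking Lemma~\ref{modul} (parts 1--4) at each step. Your packaging via the map $\sigma\colon R\to\ZZ_m$ and the fibre-counting language is somewhat tidier than the paper's case-by-case analysis---in particular, your exhaustion-of-the-fibre argument for the necessity of $T$ being skew is cleaner than the paper's direct construction of a collision---but the underlying ideas and tools coincide.
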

\begin{proof} 

(a) Sufficiency.

Let $S$ be strong and $T$ be skew. To show that $W_{ST}$ is strong, we have to show that if $\{u_1,v_1\}$ and $\{u_2,v_2\}$ are two distinct pairs in $W_{ST}$ then $u_1+v_1\not\equiv u_2+v_2\pmod{mn}$, and for any $\{u,v\}\in W_{ST}$ there holds $u+v\not\equiv 0\pmod{mn}$.

Suppose, for the sake of contradiction,
\begin{equation}\label{thm2.1}
u_1+v_1\equiv u_2+v_2\pmod{mn}.
\end{equation}

Using notation (\ref{u1v1}), we have $$(r_1+t_1)n+x_1+y_1\equiv (r_2+t_2)n+x_2+y_2  \pmod{mn}.$$

Consequently, by Lemma \ref{modul}(1), we obtain $$(r_1+t_1)n+x_1+y_1\equiv (r_2+t_2)n+x_2+y_2  \pmod{n}.$$

Then, by Lemma \ref{modul}(3), $x_1+y_1\equiv x_2+y_2\equiv C \pmod{n}$. If $C=0$ we get pairs of type (ii). Otherwise, 
since $\{x_i,y_i\}\in S,\ i=1,2$, and $S$ is strong, we conclude  $\{x_1,y_1\}=\{x_2,y_2\}$. In either case, by Lemma \ref{modul}(2), (\ref{thm2.1}) implies $r_1+t_1\equiv r_2+t_2\pmod{m}$.  Here, the pairs $(r_1,t_1), (r_2,t_2)$ are from either set $\bar T$ or $\bar T'$. By the hypothesis of the theorem, $T$ is skew, which means that all sums of the pairs of $T$ along with $T'$ are different $\pmod{m}$. 
Thus, there are two options:
\begin{enumerate}
\item either $r_1=r_2$ and $t_1=t_2$,
\item or $r_1=t_2$ and $t_1=r_2$.
\end{enumerate}
Case 1 contradicts our assumption that $\{u_1,v_1\}$ and $\{u_2,v_2\}$ are two distinct pairs in $W_{ST}$.
Case 2 is impossible due to the following reason. Let $r_1=t_2=r$ and $t_1=r_2=t$. WLOG, assume  $(r,t)\in \bar{T}$ and $(t,r)\in \bar{T'}$. But $(-r,-t)\in \bar{T'}$, and $r-t=(-t)-(-r)$, which implies that $t\equiv -r\pmod{m}$. The latter means that $\{-r,r\}\in T$, which  contradicts our assumption that $T$ is skew ($T$ is not even strong in that case since $-r+r\equiv 0\pmod{m}$). 

Finally, let $\{u,v\}\in W_{ST}$, $u=rn+x, v=tn+y$. If $u+v\equiv 0\pmod{mn}$, then by Lemma \ref{modul}(1 \& 3), $x+y\equiv 0\pmod{n}$, which is impossible since $S$ is strong. 

This completes the proof that $W_{ST}$ is a strong 2-partition $\ZZ_{mn}$.

\smallskip

(b) Necessity.

If $S$ is not strong, then, regardless of the properties of $T$, there are two possible cases:
\begin{enumerate}
\item $S$ contains a pair $\{x,y\}$ such that $x+y\equiv 0\pmod{n}$. Then consider all the pairs of the type (i) and of the form $\{rn+x,tn+y\}$. There are exactly $m$ such pairs. These $m$ pairs along with $(m-1)/2$ pairs of type (ii) yield $(3m-1)/2$ sums in $\ZZ_{mn}$, which are congruent to 0 modulo $n$. But these sums can not be all different and non-zero modulo $mn$ by Lemma \ref{modul}(4). Thus $W_{ST}$ is not strong.

\item $S$ contains two pairs $\{x_1,y_1\}$ and $\{x_2,y_2\}$ such that $x_1+y_1\equiv x_2+y_2\equiv c\pmod{n}$. Then consider all the pairs of the type (i) and of the form $\{rn+x_i,tn+y_i\},\ i=1,2$. There are $2m$ of them, and all of them yield sums in $\ZZ_{mn}$, which are congruent to $c$ modulo $n$. By Lemma \ref{modul} (4), the sums can not be all different modulo $mn$.  Thus $W_{ST}$ is not strong.

\end{enumerate}

If $T$ is not skew, some of the pairs, say, $(r_1,t_1)\in\bar{T}$ and $(r_2,t_2)\in\bar{T'}$ yield the same sum $\pmod{m}$. Let us take a pair $(x,y)\in\tilde{S}$. Then pairs
$\{r_1n+x,t_1n+y\}$ and $\{r_2n+x,t_2n+y\}$ produce the same sum modulo $mn$. Hence $W_{ST}$ is not strong.

\end{proof}
\begin{remark}
Not every 2-partition of a composite order is a product of two 2-partitions. For example, there are known \cite{b05} strong starters of order $3p$ for some prime $p>3$, but obtaining them by means of a product of two starters of orders 3 and $p$ respectively would have contradicted Theorem \ref{W is strong}.
\end{remark}

The following Theorem clarifies the question whether or not $W_{ST}$ is skew.

\begin{theorem}\label{skew theorem} Let $n,m\ge 3$ be odd integers and $S$ and $T$ be 2-partitions of $\ZZ_n$ and $\ZZ_m$ respectively.  Then their product $W_{ST}$ (Definition \ref{WST})  is skew if and only if both $S$ and $T$ are skew. 
\end{theorem}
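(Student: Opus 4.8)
The plan is to prove Theorem~\ref{skew theorem} by following the same template used in Theorems~\ref{W is a starter} and~\ref{W is strong}: split into sufficiency and necessity, and in each direction reduce statements about sums $\pm(u+v)\pmod{mn}$ in $W_{ST}$ to statements modulo $n$ (about sums of pairs of $S$) and modulo $m$ (about sums of pairs of $T$) via Lemma~\ref{modul}. Recall that $W_{ST}$ being skew means that the multiset $\{\pm(u_i+v_i)\pmod{mn}\mid \{u_i,v_i\}\in W_{ST}\}$ is exactly $\ZZ_{mn}^*$; since $W_{ST}$ has $(mn-1)/2$ pairs, this is equivalent to all the sums $\pm(u_i+v_i)$ being distinct and nonzero modulo $mn$.

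For sufficiency, assume both $S$ and $T$ are skew. First I would note that skew implies strong, so by Theorem~\ref{W is strong}, $W_{ST}$ is already strong; in particular no sum $u+v$ vanishes mod $mn$ and the positive sums $u_1+v_1, u_2+v_2$ of distinct pairs are distinct mod $mn$. What remains is to rule out $u_1+v_1 \equiv -(u_2+v_2)\pmod{mn}$ for distinct pairs (and to handle the case where the two pairs coincide, i.e. $2(u+v)\not\equiv 0$, which is automatic since $mn$ is odd and $u+v\not\equiv 0$). Writing $u_i = r_i n + x_i$, $v_i = t_i n + y_i$ with $(x_i,y_i)\in\tilde S$ and $(r_i,t_i)$ in $\bar T\cup\bar T'\cup\{(0,0)\}$, the congruence $u_1+v_1\equiv -(u_2+v_2)\pmod{mn}$ gives, by Lemma~\ref{modul}(1,3), $x_1+y_1\equiv -(x_2+y_2)\pmod n$. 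Since $S$ is skew, the sums $\pm(x_i+y_i)$ are all distinct and nonzero mod $n$; this forces $x_2+y_2 \equiv -(x_1+y_1)$ to be realized by the \emph{conjugate} ordered pair, i.e. WLOG (after possibly swapping the roles of the two pairs and/or using that $S$ is closed under conjugation in the sense that its sums come in $\pm$ pairs) we may take $(x_2,y_2) = (-x_1,-y_1)$ or $x_1+y_1\equiv 0$ (excluded). Here I need to be careful: $\tilde S$ is a fixed set of ordered representatives, so the pair with sum $\equiv -(x_1+y_1)$ is a genuinely different pair of $S$ — this is exactly the skew condition at work, and it pins down which pair of $S$ both $\{u_1,v_1\}$ and $\{u_2,v_2\}$ come from. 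Then, reducing mod $mn$ and peeling off the $S$-contribution via Lemma~\ref{modul}(2), one gets a relation $r_1+t_1 \equiv -(r_2+t_2)\pmod m$ among the $\bar T/\bar T'$ indices, and the skewness of $T$ (again: sums of $T$ and $T'$ together are all distinct mod $m$, equivalently $\pm$ sums of $T$ exhaust $\ZZ_m^*$) forces the two index pairs to coincide as unordered pairs, and then the same $(r,t)$ vs $(t,r)$ dichotomy as in Theorem~\ref{W is strong} arises, Case~2 being killed by the argument that $t\equiv -r\pmod m$ would put $\{-r,r\}$ in $T$, contradicting that $T$ is skew (not even strong). Case~1 contradicts the distinctness of $\{u_1,v_1\}$ and $\{u_2,v_2\}$.

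For necessity, suppose $W_{ST}$ is skew. Then $W_{ST}$ is strong, so by Theorem~\ref{W is strong}, $S$ is strong and $T$ is skew. It remains to upgrade ``$S$ strong'' to ``$S$ skew''. Assume toward a contradiction that $S$ is not skew; then either some sum $x+y\equiv 0$ (but that contradicts $S$ strong, already excluded) or there are two pairs $\{x_1,y_1\},\{x_2,y_2\}\in S$ with $x_1+y_1 \equiv -(x_2+y_2)\pmod n$, both sums nonzero. Exactly as in the necessity part of Theorem~\ref{W is strong}, I would exhibit a large family of pairs of $W_{ST}$ whose sums are all $\equiv \pm(x_1+y_1)\pmod n$: namely $\{rn+x_i, tn+y_i\}$ and $\{-rn+x_i,-tn+y_i\}$ for $(r,t)\in\bar T$ together with the base pairs $\{x_i,y_i\}$, for $i=1,2$ — a multiset of roughly $2m$ elements (here I'd count carefully using $|\bar T| = q$ and $m=2q+1$) all lying in the union of two residue classes mod $n$, hence congruent mod $n$ to elements of a set of size $\le 2$, so more than $2$ of them (in fact $\ge m+1$ of them) fall in one class; then Lemma~\ref{modul}(4) yields two of these pair-sums equal mod $mn$, and since $W_{ST}$ is a 2-partition these come from distinct pairs, contradicting skewness of $W_{ST}$. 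The main obstacle, and the step to write out with care, is the sufficiency direction: correctly bookkeeping the ordered representatives in $\tilde S$ and $\bar T, \bar T'$ so that the reduction ``$\pm$ sum mod $mn$ coincide $\Rightarrow$ $\pm$ sums mod $n$ coincide $\Rightarrow$ same $S$-pair $\Rightarrow$ $\pm$ sums mod $m$ coincide $\Rightarrow$ same $T$-index-pair'' is airtight, and in particular handling the sign bookkeeping (the $(r,t)$ vs $(t,r)$ and $\bar T$ vs $\bar T'$ cases) exactly as in Theorem~\ref{W is strong} rather than glossing over it.
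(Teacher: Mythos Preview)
Your necessity argument is essentially the paper's and it works once you package the $\pm$ sums correctly: the $m$ sums $(r+t)n+(x_1+y_1)$ together with the $m$ negated sums $-(r+t)n-(x_2+y_2)$ are all $\equiv c\pmod n$, so Lemma~\ref{modul}(4) gives a collision among the $\pm$ sums of $W_{ST}$, contradicting skewness. (Your remark about ``two residue classes'' and ``$\ge m+1$ in one class'' is a small wobble; it is cleaner to put everything in the single class $c$ by negating the second batch, exactly as the paper does via the conjugate $W'_{ST}$.)

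Your sufficiency plan, however, has a real gap. After reducing to $x_1+y_1\equiv -(x_2+y_2)\pmod n$ you write that skewness of $S$ ``pins down which pair of $S$'' has sum $-(x_1+y_1)$. But skewness of $S$ says exactly that \emph{no} pair of $S$ has sum $-(x_1+y_1)$: the multiset $\{\pm(x+y):\{x,y\}\in S\}$ equals $\ZZ_n^*$ with no repetitions, so $x_1+y_1\equiv -(x_2+y_2)$ is already a contradiction whenever $(x_1,y_1),(x_2,y_2)\in\tilde S$. There is no ``conjugate ordered pair'' in $\tilde S$ to identify, and no need to pass to the $T$-indices or replay the $(r,t)$ versus $(t,r)$ dichotomy from Theorem~\ref{W is strong}. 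The paper's sufficiency is accordingly much shorter than your plan: the mod-$n$ reduction finishes the type~(i) case outright.

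What this streamlining exposes is the case you omit entirely: pairs of type~(ii), where $x=y=0$. Your parametrisation $u_i=r_in+x_i$ with $(x_i,y_i)\in\tilde S$ covers only type~(i). If both $\{u_1,v_1\}$ and $\{u_2,v_2\}$ are of type~(ii) the mod-$n$ reduction is vacuous; one must use $(r_1+t_1)\equiv -(r_2+t_2)\pmod m$ and invoke skewness of $T$ directly. (A mixed case, one type~(i) and one type~(ii), gives $x_1+y_1\equiv 0\pmod n$, killed by $S$ strong.) This is the \emph{only} place in the sufficiency proof where skewness of $T$ is needed, and your plan never reaches it.
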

\begin{proof}

 (a) Sufficiency.

 Let $S$ and $T$ be skew and let $\{u_1,v_1\}$ and $\{u_2,v_2\}$ be two arbitrary distinct pairs of $W_{ST}$. By Theorem \ref{W is strong}, we know that $W_{ST}$ is strong, that is, $u_1+v_1 \not\equiv u_2+v_2 \pmod{mn}$. To show that $W_{ST}$ is skew, it remains to show that there holds
\begin{equation}\label{skewness condition}
u_1+v_1 \not\equiv - (u_2+v_2) \pmod{mn}.
\end{equation}

 Suppose, for the sake of contradiction, that (\ref{skewness condition}) is not true, that is, in notation (\ref{u1v1}),
 $$
 r_1n+x_1+t_1n+y_1\equiv - (r_2n+x_2+t_2n+y_2) \pmod{mn}.
 $$
 By Lemma \ref{modul}(1 \& 3), we obtain $x_1+y_1 \equiv -(x_2+y_2) \pmod{n}$, which is impossible as  $S$ is skew, unless $\{\{u_i,v_i\}\}_{i=1}^2$ are pairs of type (ii). But then $(r_1n+t_1n)\equiv -(r_2n+t_2n)\pmod{mn}$, and hence, by Lemma \ref{modul}(1) $(r_1+t_1)\equiv -(r_2+t_2)\pmod{m}$, which is impossible, since $T$ is skew.
 
This contradiction implies that for any two distinct pairs in $W_{ST}$, $\{u_1,v_1\}$ and $\{u_2,v_2\}$, there holds (\ref{skewness condition}). Therefore, $W_{ST}$ is skew.

\smallskip

(b) Necessity.

By Theorem \ref{W is strong}, if $T$ is not skew, then $W_{ST}$ is not skew.

Now, suppose, $S$ is strong but not skew, then $\bar{S}$ contains at least two pairs $(x_1,y_1)$ and  $(x_2,y_2)$ such that $(x_1+y_1)\equiv -(x_2+y_2)\equiv c\pmod{n}$. It is clear that $c\not\equiv 0\pmod{n}$, as $S$ is strong. 

There are  $m$ pairs $\{rn+x_1,tn+y_1\}$ of the type (i) in the starter $W_{ST}$.There are also  $m$ pairs $\{rn+x_2,tn+y_2\}$ of the type (i) in the conjugate 2-partition $W'_{ST}$.

These pairs yield $2m$ sums in $\ZZ_{mn}$, which are congruent to $c$ modulo $n$. But they could not be all different modulo $mn$ by Lemma \ref{modul} (4). We conclude that there are two pairs among these $2m$ pairs that satisfy $\{\pm(u_1+v_1)\}=\{\pm(u_2+v_2)\}\pmod{mn}$. Hence, $W_{ST}$ is not skew.

\end{proof}

Finally, we deal with Skolem 2-partitions. 

\begin{theorem}\label{Skolemness} Let $n,m\ge 3$ be odd integers and $S$ and $T$ be 2-partitions of $\ZZ_n$ and $\ZZ_m$ respectively.  Then their product
$W_{ST}$ (Definition \ref{WST}) is a Skolem 2-partition of $\ZZ_{mn}$ if and only if $S$ and $T$ are both Skolem 2-partitions of $\ZZ_{n}$ and $\ZZ_{m}$ respectively.
\end{theorem}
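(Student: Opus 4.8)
I would prove both implications directly, in the spirit of Theorems~\ref{W is strong} and~\ref{skew theorem}. Throughout write $n=2q+1$, $m=2p+1$, so that $mn=2Q+1$ with $Q=(mn-1)/2=pn+q$. The whole argument rests on one elementary reformulation: a pair $\{a,b\}$ is Skolem of order $N=2L+1$ if and only if, choosing $a,b\in\{1,\dots,N-1\}$ with $a<b$, one has $b-a\le L$ — because $0<b-a<N$, so the reduction modulo $N$ in (\ref{skolem}) is vacuous. I would also use that $W_{ST}$ is a $2$-partition of $\ZZ_{mn}^*$ by Theorem~\ref{W is a partition}, together with the canonical form $u=nr+x$ ($0\le r<m$, $0\le x<n$) of its elements: type (ii) pairs have $x=y=0$ and $\{r,t\}$ a pair of $T$; type (i) pairs have $x,y\in\{1,\dots,n-1\}$ and $\{r,t\}$ equal to $\{0,0\}$, or a pair of $T$ (when $(r,t)\in\bar T$), or a pair of the conjugate $T'$ (when $(r,t)\in\bar T'$) — and $T'$ is Skolem precisely when $T$ is.

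\emph{Sufficiency.} Assume $S$ and $T$ are Skolem. Take any $\{u,v\}\in W_{ST}$, write $u=nr+x$, $v=nt+y$, and assume $u<v$; I must show $v-u\le Q$. For a type (ii) pair, $v-u=n(t-r)$ with $\{r,t\}$ a Skolem pair of $T$, so $t-r\le p$ and $v-u\le np\le Q$. For a type (i) pair with $r=t=0$, $v-u=y-x\le n-2<Q$. For a type (i) pair with $r\ne t$, the inequality $u<v$ forces $r<t$, hence $v-u=n(t-r)+(y-x)$ with $t-r\le p$ (as $\{r,t\}$ is Skolem in $T$ or in $T'$); if $t-r\le p-1$ then $v-u\le n(p-1)+(n-2)<Q$, and if $t-r=p$ then $\{x,y\}$ being Skolem in $S$ gives $y-x\le q$ (automatic when $x>y$), so $v-u\le np+q=Q$. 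Thus every pair of $W_{ST}$ is Skolem of order $mn$, i.e.\ $W_{ST}$ is a Skolem $2$-partition.

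\emph{Necessity.} I would argue by contraposition, producing one non-Skolem pair of $W_{ST}$. If $T$ is not Skolem, take a pair $\{\rho,\tau\}$ of $T$ with $\rho<\tau$ and $\tau-\rho\ge p+1$; then the type (ii) pair $\{n\rho,n\tau\}$ has difference $n(\tau-\rho)$, and $Q=np+q<np+n\le n(\tau-\rho)<mn$, so it is not Skolem. Hence assume $T$ is Skolem and $S$ is not; take a pair $\{\xi,\eta\}$ of $S$ with $\xi<\eta$ and $\eta-\xi\ge q+1$. The idea is to amplify this defect: locate a pair $\{a,a+p\}$ of $T$ of natural difference exactly $p$ (for instance, when $T$ is a starter, since then the differences of $T$ are precisely $1,\dots,p$); it occurs in $\bar T$ in one of the two orders and therefore in $\bar T'$ in the opposite order. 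Combining each of these two ordered copies with the fixed ordered copy of $\{\xi,\eta\}$ occurring in $\tilde S$ produces two type (i) pairs of $W_{ST}$ whose natural differences are $np+(\eta-\xi)$ and $np-(\eta-\xi)$ in some order; the larger, $np+(\eta-\xi)$, satisfies $Q=np+q<np+(\eta-\xi)<n(p+1)\le mn$, so one of these two pairs is not Skolem of order $mn$. Hence $W_{ST}$ is not a Skolem $2$-partition.

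\emph{Where the difficulty lies.} Sufficiency is just the size estimate above, and mirrors the easy halves of Theorems~\ref{W is strong} and~\ref{skew theorem}. The delicate part is the necessity case ``$S$ not Skolem, $T$ Skolem'': since the orderings used to build $\tilde S$, $\bar T$ and $\bar T'$ are essentially unconstrained, the offending pair of $W_{ST}$ must be produced \emph{regardless} of those choices. The device that makes this work — amplifying the defect $\eta-\xi>q$ of the bad $S$-pair by the extreme difference $p$ available in $T$, and using the two copies in $\bar T$ and $\bar T'$ so that at least one of them combines constructively with the bad $S$-pair no matter how things were ordered — is the crux, and the one computation not to botch is checking that the resulting difference lies strictly between $Q=pn+q$ and $mn$, so that it genuinely violates (\ref{skolem}) and is not rescued by reduction modulo $mn$.
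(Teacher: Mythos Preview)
Your argument follows the paper's plan exactly: for sufficiency you bound each natural difference by $Q=np+q$, and for necessity you first handle ``$T$ not Skolem'' via a type~(ii) pair, then combine the bad pair of $S$ with a pair of $T$ of maximal natural difference.

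There is, however, a genuine gap in the necessity step, and it is the same gap the paper glosses over. You write ``locate a pair $\{a,a+p\}$ of $T$ of natural difference exactly $p$'' and then hedge with ``for instance, when $T$ is a starter''. That hedge is essential: a Skolem $2$-partition of $\ZZ_m^*$ need not contain any pair of natural difference $p$ (or even $\ge p$). Take $m=5$, $T=\{\{1,2\},\{3,4\}\}$ (Skolem, both differences equal to $1$), and $n=5$, $S=\{\{1,4\},\{2,3\}\}$ (not Skolem, since $4-1=3>q=2$); with $\tilde S=\{(1,4),(2,3)\}$ and $\bar T=\{(1,2),(3,4)\}$ every pair of the resulting $W_{ST}$ in $\ZZ_{25}$ has natural difference at most $8<12=Q$, so $W_{ST}$ is Skolem even though $S$ is not. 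The paper makes the parallel unjustified move ``take the pair $\{r,t\}\in T$ with $t-r\ge\frac{m-1}{2}$''. Your proof (like the paper's) is therefore complete exactly in the case you flag --- when $T$ is a Skolem \emph{starter}, which guarantees a pair of each difference $1,\dots,p$ --- but not for arbitrary $2$-partitions. One minor side remark: your phrase ``occurs in $\bar T'$ in the opposite order'' is not literally right, since $\bar T'$ consists of the negated ordered pairs $(-r,-t)$ rather than the transposes $(t,r)$; nonetheless your conclusion that the two resulting type~(i) pairs have natural differences $np\pm(\eta-\xi)$ is correct, because $(m-r)-(m-t)=t-r$.
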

\begin{proof}
 Let us order $\ZZ_{k}: 0<1<...<k-1,\ k\in\NN$.

(a) Sufficiency.

Let $S$ and $T$ be Skolem 2-partitions of orders $n$ and $m$ respectively. To show that $W_{ST}$ is Skolem, we have to show that all its pairs $\{u,v\}$ are Skolem pairs of order $mn$, that is, 
$u<v$ and $v-u\le \frac{mn-1}2$.

Let  the pair $\{x^*,y^*\}\in S$ make the greatest difference in $S$, that is, $x^*<y^*,\ y^*-x^*\le\frac{n-1}2$. As well, consider the pairs in $\{r^*,t^*\}\in T$ and $\{-t^*,-r^*\}\in T'$, which make the greatest difference $t^*-r^*=(-r^*)-(-t^*)\le \frac{m-1}2$.

Each of the pairs $\{u,v\}$ in the form either $(x_i,y_i)$ or $(r_jn,t_jn)$  is Skolem because for any $n\ge 1$ and $m\ge 1$ we have:
$$
 v-u\le y^*-x^*\le\frac{n-1}{2}<\frac{mn-1}{2}, \qquad v-u\le (t^*-r^*)n\le\left(\frac{m-1}{2}\right)n=\frac{mn-n}{2}<\frac{mn-1}{2}.
$$
 
For other pairs of type (i) we consider two cases:
\begin{enumerate}
\item $(x^*,y^*)\in\bar{S}$. Then a pair $\{u,v\}\in W_{ST}$, where $u=r^*n+x^*<\ v=t^*n+y^*$, makes the greatest possible difference among the pairs of $W_{ST}$,
$$v-u=t^*n+y^*-(r^*n+x^*)\le\frac{m-1}{2}n-\frac{n-1}{2}=\frac{mn-1}{2}.$$
\item  $(y^*,x^*)\in\bar{S}$. Then a pair $\{u,v\}\in W_{ST}$, where $u=-r^*n+y^*>\ v=-t^*n+x^*$, makes the greatest possible difference among the pairs of $W_{ST}$,
$$u-v=-r^*n+y^*-(-t^*n+x^*)\le\frac{mn-1}{2}.$$

\end{enumerate}

All other pairs of type (i) are clearly Skolem as they make no difference greater than $\frac{mn-1}{2}$.

(b) Necessity.

If $T$ is not Skolem, then it contains a pair $\{r,t\}$ which is not Skolem, that is, given $r<t$, we have $t-r\ge\frac{m+1}{2}$. Then the corresponding pair of type (ii), $\{rn,tn\}\in W_{ST}$, yields a difference greater than $\frac{mn-1}{2}$:
$$tn-rn\ge\frac{m+1}{2}n=\frac{mn+n}{2}>\frac{mn-1}{2}.$$

If $S$ is not Skolem, there is $\{x,y\}\in S$ which is not Skolem, that is, given $x<y,\ y-x\ge\frac{n+1}{2}$. Now, let us take the pair $\{r,t\}\in T$, such that $r<t$ and $t-r\ge\frac{m-1}{2}$. WLOG, assume $(x,y)\in\bar{S}$ and $(r,t)\in \bar{T}$. Then the pair $\{u,v\}\in W_{ST},\ u=rn+x<v=tn+y$ makes the difference
$$v-u \ge\frac{m-1}{2}n+\frac{n+1}{2}=\frac{mn-n+n+1}{2}=\frac{mn+1}{2}.$$
That means $\{u,v\}\in W_{ST}$ is not a Skolem pair (\ref{skolem}) of order $mn$ and hence, by Lemma \ref{defn of Skolem starter2}, $W_{ST}$ is not Skolem.
\end{proof}

 Let us summarize the results of this subsection.

\begin{theorem}\label{intermediate result}
 Let $S$ and $T$ be 2-partitions in $\ZZ_n$ and $\ZZ_m$ respectively.

1. If both $S$ and $T$ are Skolem starters and, in addition, $S$ is strong and $T$ is skew, then the product $ W_{ST}$ (Definition \ref{WST}) is a strong Skolem starter in $\ZZ_{nm}$. Moreover, if $S$ and $T$ are both skew and Skolem in their groups, then $ W_{ST}$ is a skew Skolem starter in $\ZZ_{nm}$.

2. If the product $W_{ST}$ of  partitions $S$ and $T$ is a strong but not skew Skolem starter then $S$ is a strong but not skew Skolem starter and $T$ is a skew Skolem starter. If the product $W_{ST}$ of  partitions $S$ and $T$ is a skew Skolem starter then  both $S$ and $T$ are skew Skolem starters.

\end{theorem}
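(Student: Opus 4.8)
The plan is to derive Theorem~\ref{intermediate result} as a direct corollary of the four equivalence theorems already established in this subsection --- Theorems~\ref{W is a starter}, \ref{W is strong}, \ref{skew theorem}, and \ref{Skolemness} --- together with Lemma~\ref{defn of Skolem starter2}, which identifies a Skolem starter with a 2-partition that is simultaneously a starter and a Skolem 2-partition. The key observation is that each defining property of a ``strong Skolem starter'' or a ``skew Skolem starter'' --- being a starter, being strong, being skew, being Skolem --- is governed independently by one of these theorems, so the composite statement follows simply by conjoining them. No genuinely new argument is needed; the proof is essentially bookkeeping.

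For Part~1, first I would invoke the sufficiency direction of Theorem~\ref{W is a starter}: since $S$ is a starter in $\ZZ_n$ and $T$ is a starter in $\ZZ_m$, the product $W_{ST}$ is a starter in $\ZZ_{nm}$. Then the sufficiency direction of Theorem~\ref{W is strong}, applied with $S$ strong and $T$ skew, gives that $W_{ST}$ is strong; and the sufficiency direction of Theorem~\ref{Skolemness}, applied with $S$ and $T$ both Skolem, gives that $W_{ST}$ is a Skolem 2-partition. Combining these three conclusions via Lemma~\ref{defn of Skolem starter2}, $W_{ST}$ is a strong Skolem starter in $\ZZ_{nm}$. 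For the second half of Part~1, I would keep the starter and Skolem conclusions and replace the appeal to Theorem~\ref{W is strong} by the sufficiency direction of Theorem~\ref{skew theorem}: with $S$ and $T$ both skew Skolem starters, $W_{ST}$ is skew, hence a skew Skolem starter.

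For Part~2, I would run the same four theorems in the necessity direction. Suppose $W_{ST}$ is a strong but not skew Skolem starter. Being a starter forces $S$ and $T$ to be starters (Theorem~\ref{W is a starter}); being strong forces $S$ to be strong and $T$ to be skew (Theorem~\ref{W is strong}); being Skolem forces $S$ and $T$ to be Skolem 2-partitions (Theorem~\ref{Skolemness}). It remains to rule out that $S$ is skew, and this is the one point requiring a moment's care: it is not the negation of a necessity statement, but the contrapositive of the \emph{sufficiency} half of Theorem~\ref{skew theorem}. Namely, if $S$ were skew, then, since $T$ is already known to be skew, that theorem would force $W_{ST}$ to be skew, contradicting the hypothesis. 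Hence $S$ is a strong but not skew Skolem starter and $T$ is a skew Skolem starter. Finally, if $W_{ST}$ is a skew Skolem starter, then the necessity directions of Theorems~\ref{W is a starter}, \ref{skew theorem}, and \ref{Skolemness} give at once that $S$ and $T$ are both starters, both skew, and both Skolem, i.e.\ both are skew Skolem starters.

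The only step that is not purely mechanical is the deduction of ``$S$ not skew'' described above; everything else is a matter of quoting the appropriate direction of the appropriate theorem and assembling the properties.
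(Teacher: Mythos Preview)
Your proposal is correct and follows exactly the paper's approach: the paper's own proof is the single sentence ``The statement follows from Theorems~\ref{W is a starter}, \ref{W is strong}, \ref{Skolemness}, \ref{skew theorem},'' and you have simply spelled out which direction of each theorem is invoked at each step. Your observation that the clause ``$S$ is not skew'' in Part~2 comes from the contrapositive of the \emph{sufficiency} direction of Theorem~\ref{skew theorem} (rather than from a necessity direction) is a nice point of care that the paper leaves implicit.
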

\begin{proof}The statement follows from Theorems \ref{W is a starter}, \ref{W is strong}, \ref{Skolemness}, \ref{skew theorem}.
\end{proof}
\begin{remark}The direct part of Theorem \ref{intermediate result} can be obtained by combining the idea and constructions of Gross in \cite{b22}, Turgeon in \cite{b28} and Chen et al \cite{b27}. The converse statement requires a more general consideration of the product of two partitions.
\end{remark} 
\begin{ex}\label{order 187}
 
Let us choose  the strong (but not skew) Skolem starter $S$ of order 17 from Example \ref{example of independence of properties}. Using Definition \ref{WST}, it is possible to generate strong Skolem starters of orders $17m$, where $m$ is one of the orders of the known skew Skolem starters. By Theorem \ref{Ogandzhanyants theorem1}, there are infinitely many such starters. Paper \cite{b21} gives an explicit way of constructing a family of cardioidal starters (\ref{card}). It was proven in \cite{b21} that every cardioidal starter is skew unless its order is divisible by 3. 
Take for example the following cardioidal starter of order $11$: $T=\{\{1,2\},\{7,9\},\{3,6\},\{4,8\},\{5,10\}\}$. Since $T$ is a skew Skolem starter, $W_{ST}$ is a strong Skolem starter in  $\ZZ_{17\cdot 11}=\ZZ_{187}$.

\end{ex}
	
	Definition \ref{WST} can be modified in a variety of ways to achieve diversity of the obtained 2-partitions. For example, we could make  an alternative construction of the pairs (\ref{uv}) of $W_{ST}$:

(i$^*)\ 2pq$ pairs: one for each  $(x,y)\in \bar{S}\cup \bar{S'}$ and for each $(r,t)\in \tilde T$ and $q$ pairs for $r=t=0$ and $(x,y)\in \bar{S}$. 

(ii$^*)\ p$ pairs: one for each $(r,t)\in \tilde T$ and $x=y=0$.\\
The proofs of the statements, involving this way of constructing $W_{ST}$, will be analogous. 

This modification, while significantly expanding the variety of the obtained starters, does not let us produce a strong starter as a product of two strong starters. For example, we cannot obtain a strong Skolem starter of order $17^2=289$ out of the starter $S$ of order 17 used in Example \ref{order 187}. To achieve this objective, we have to further modify Definition \ref{WST}. 

\subsection{The product of 2-partitions with a nucleus}\label{The parametrized product of starters}

In this section, we construct a  family of products by introducing the following object.
	
\begin{df}\label{X}  The set of ordered pairs $X_m=\{(u_i,v_i)\}_{i=1}^{m-1}\subset \ZZ_m^*\times\ZZ_m^*$  is called a nucleus of order $m$ if $\cup_{i=1}^{m-1} u_i=\cup_{i=1}^{m-1} v_i=\ZZ_m^*$.

\end{df}

Then we define a product  with nucleus $X_m$.

\begin{df}\label{WXST} Let $S$ and $T$ be 2-partitions  of $\ZZ_n^*$ and $\ZZ_m^*$ respectively, $ n=2q+1,\ m=2p+1, \, q,p\ge 1$. Let $X_m$ be a nucleus of order $m$.
The set $W^X_{ST}$ consisting of pairs of the form(\ref{uv}) divided in the following  types:

	 (i$_X)\ mq$ pairs: one for each $(x,y)\in \tilde S$ and $(r,t)\in \{0,0\} \cup X_m$;
	
	 (ii$_X)\ p$ pairs: one for each $(r,t)\in \tilde T$ and $x=y=0$,

 is called the $X$-generated product of $S$ and $T$.
\end{df}
	Below we will show that different choices of nucleus $X$ will lead to different properties of $W_{ST}^X$. It turns out that the use of a proper nucleus allows us to loosen the hypotheses of Theorems \ref{W is strong} and \ref{intermediate result}. Consequently, we can extend the family of strong Skolem starters and extend the list of orders $n$ such that $\ZZ_n$ admits a strong Skolem starter.

\begin{theorem}\label{WX is a partition}The set $W^X_{ST}$ (Definitions \ref{X} and \ref{WXST}) is a partition of $\ZZ_{mn}$.
\end{theorem}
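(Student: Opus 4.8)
The plan is to mirror the counting-plus-distinctness argument of Theorem~\ref{W is a partition}, adapted to the modified type list (i$_X$), (ii$_X$). First I would verify the cardinality: there are $mq$ pairs of type (i$_X$) and $p$ pairs of type (ii$_X$), contributing $2(mq+p) = 2mq + 2p$ elements; since $n = 2q+1$ and $m = 2p+1$, this equals $m(n-1) + (m-1) = mn - 1 = |\ZZ_{mn}^*|$. So it suffices to show that all $mn-1$ elements appearing in the pairs of $W^X_{ST}$ are pairwise distinct modulo $mn$, which simultaneously shows none of them is $\equiv 0$.

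Next I would partition the elements by ``shape'' and rule out collisions within and across shapes. The type (ii$_X$) elements are the nonzero multiples of $n$, namely $\{rn : (r,t)\in\tilde T\}\cup\{tn : (r,t)\in\tilde T\}$; since $T$ is a 2-partition of $\ZZ_m^*$, the union $\cup\{r,t\}$ over $\tilde T$ is all of $\ZZ_m^*$, so these $m-1$ elements are distinct mod $mn$. Among type (i$_X$) elements, the sub-block with $(r,t)=(0,0)$ gives exactly the elements of $\tilde S$, which lie strictly between $0$ and $n$ and are distinct because $S$ is a 2-partition of $\ZZ_n^*$; these are never multiples of $n$ and never exceed those in the other blocks. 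For a general type (i$_X$) element $rn+x$ with $(x,y)\in\tilde S$, $(r,t)\in X_m$, reducing mod $n$ gives $x\not\equiv 0$, so it is not a multiple of $n$ — this separates (i$_X$) from (ii$_X$) and from the $(0,0)$ block. Finally, suppose two type (i$_X$) elements coincide mod $mn$, say $r_1n+x_1 \equiv r_2n+x_2$ (or with some $x$'s replaced by $y$'s). By Lemma~\ref{modul}(1,3), $x_1\equiv x_2\pmod n$, and since $S$ is a 2-partition this forces the underlying pairs of $S$ to coincide and, after matching coordinates, $x_1=x_2$; then Lemma~\ref{modul}(2) gives $r_1\equiv r_2\pmod m$, hence $r_1=r_2$. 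If the two elements came from different coordinates of the same $S$-pair (an ``$x$ equals $y$'' collision), then the defining property $\cup u_i = \cup v_i = \ZZ_m^*$ of the nucleus is exactly what is needed: it guarantees that for a fixed residue class mod $n$ the multipliers $r$ used in the first coordinate and $t$ used in the second together never repeat a value mod $m$ in a way that produces equality — I would check this case by the same Lemma~\ref{modul} reduction, concluding the relevant $r$ and $t$ entries must be equal, contradicting distinctness of the pairs.

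The step I expect to be the main obstacle is precisely the last case-distinction, where a first-coordinate entry $rn+x$ of one (i$_X$) pair might equal a second-coordinate entry $tn+x$ of another (i$_X$) pair (possible only if the same value $x$ occurs as $x_i$ in one ordered $S$-pair and as $y_j$ in another). In the original Definition~\ref{WST} this was handled by property~(\ref{order}), which said $\cup\{\pm x_i\} = \ZZ_n^*$; here the control over multipliers has been shifted into the nucleus axiom $\cup u_i = \cup v_i = \ZZ_m^*$, and I need to confirm that this axiom is strong enough to close the argument — i.e. that whenever $x$ appears in both the ``$x$-slot'' and the ``$y$-slot'' of $\tilde S$, the set of $r$-values paired with it and the set of $t$-values paired with it still cannot collide mod $m$ except trivially. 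Once that is verified, distinctness of all $mn-1$ elements follows, and therefore $W^X_{ST}$ is a 2-partition of $\ZZ_{mn}^*$.
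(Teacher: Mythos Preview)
Your approach is exactly the paper's (the paper simply says the argument is analogous to Theorem~\ref{W is a partition}, with $X_m$ playing the role of $\bar T\cup\bar T'$), and your counting and case structure are correct. But you have mislocated where the nucleus axiom does its work.

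The ``main obstacle'' you anticipate is in fact vacuous. Since $S$ is a 2-partition of $\ZZ_n^*$, the $2q$ coordinates of $\tilde S$ are pairwise distinct elements of $\{1,\dots,n-1\}$; no element of $\ZZ_n^*$ can appear both as some $x_i$ and as some $y_j$. Hence a cross-slot collision $r_1n+x_1\equiv t_2n+y_2\pmod{mn}$ would force $x_1=y_2$, which is impossible outright --- no appeal to $X_m$ is needed here.

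Where the nucleus axiom $\bigcup u_i=\bigcup v_i=\ZZ_m^*$ is actually required is the step you pass over: after deducing $x_1=x_2$ and then $r_1\equiv r_2\pmod m$, you must conclude that the two $(r,t)$-pairs in $\{(0,0)\}\cup X_m$ coincide. That holds precisely because the first coordinates of $\{(0,0)\}\cup X_m$ list each element of $\ZZ_m$ exactly once (and likewise for second coordinates, which handles the $v$--$v$ collisions). Once you insert that one sentence, the argument closes immediately and the final paragraph of your proposal can be deleted.
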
 
\begin{proof}  The proof is analogous to that of  Theorem \ref{W is a partition} as the set of ordered pairs $X$ has all the properties of $\bar T\cup \bar T'$ used to prove that Theorem. 
(Similar proof of this statement was offered in \cite{b25}).
\end{proof}
\begin{df}\label{Xproperties} A nucleus $X_m$ of order $m$ is called
\begin{enumerate}
\item  subtractive in $\ZZ_m$, if $\{(u_i-v_i)\pmod{m}\}_{i=1}^{m-1}=\ZZ_m^*$;
\item skew in $\ZZ_m$, if $\{(u_i+v_i)\pmod{m}\}_{i=1}^{m-1}=\ZZ_{m}^*$;
\item Skolem in $\ZZ_m$, if it consists of Skolem pairs (\ref{skolem}) of order $m$.
\end{enumerate}
\end{df}

\begin{theorem}\label{WX is a starter} 
The set $W^X_{ST}$ (Definitions \ref{X} and \ref{WXST}) is a starter in $\ZZ_{mn}$ if and only if $X_m$ is subtractive in $\ZZ_m$, and $S$ and $T$ are starters in $\ZZ_n$ and $\ZZ_m$ respectively.
\end{theorem}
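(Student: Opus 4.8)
The plan is to mirror the structure of the proof of Theorem~\ref{W is a starter}, replacing the role of $\bar T\cup\bar T'\cup\{(0,0)\}$ by $X_m\cup\{(0,0)\}$ and the ``$T$ is a starter'' hypothesis by ``$X_m$ is subtractive in $\ZZ_m$''. Throughout I fix the natural order on each $\ZZ_k$, write $n=2q+1$, $m=2p+1$, and use the notation $u=rn+x$, $v=tn+y$ from (\ref{uv}). By Theorem~\ref{WX is a partition} the set $W^X_{ST}$ is already a 2-partition of $\ZZ^*_{mn}$; so in both directions it only remains to analyze the multiset of differences $\pm(u-v)\pmod{mn}$.

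For sufficiency, assume $X_m$ is subtractive and $S,T$ are starters. The pairs of type (ii$_X$) are $\{rn,tn\}$ with $\{r,t\}\in T$ ordered arbitrarily; since $T$ is a starter in $\ZZ_m$ these contribute exactly the $m-1$ differences of the form $n\Delta$, $\Delta\in\ZZ_m^*$. For two distinct pairs $\{u_1,v_1\},\{u_2,v_2\}$ of type (i$_X$), suppose $(u_1-v_1)\equiv(u_2-v_2)\pmod{mn}$. Reducing modulo $n$ and applying Lemma~\ref{modul}(1,3) gives $(x_1-y_1)\equiv(x_2-y_2)\pmod{n}$, so because $S$ is a starter the unordered pairs $\{x_1,y_1\}$ and $\{x_2,y_2\}$ coincide; since both are ordered the same way in $\tilde S$ we get $(x_1,y_1)=(x_2,y_2)=(x,y)$. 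Substituting back and using Lemma~\ref{modul}(2) yields $(r_1-t_1)\equiv(r_2-t_2)\pmod m$; but for $(r_i,t_i)\in X_m\cup\{(0,0)\}$ the differences $r_i-t_i$ are, by subtractivity together with $0-0=0$, all distinct modulo $m$ (the nucleus realizes every nonzero residue once, and $(0,0)$ gives $0$), forcing $(r_1,t_1)=(r_2,t_2)$, a contradiction. One must also check no type~(i$_X$) difference coincides with a type~(ii$_X$) difference $n\Delta$, $\Delta\ne0$: a type~(i$_X$) pair with $x\ne y$ has difference $\not\equiv0\pmod n$, and the ``diagonal'' type~(i$_X$) pairs with $x=y=0$ only occur for $(r,t)\in X_m$ with $r\ne t$, giving differences $n(r-t)$ that are already among the claimed starter differences and hence must be handled jointly — so really one argues that the \emph{entire} collection of differences coming from $(r,t)\in X_m\cup\{(0,0)\}$ paired with $(x,y)\in\tilde S$ together with the type~(ii$_X$) differences exhausts $\ZZ^*_{mn}$ once, by the counting already done in Theorem~\ref{WX is a partition} plus the injectivity just established.

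For necessity, suppose one of the three hypotheses fails and exhibit a repeated difference. If $T$ is not a starter it has two pairs with $\{\pm(r_1-t_1)\}=\{\pm(r_2-t_2)\}\pmod m$, and the corresponding type~(ii$_X$) pairs $\{r_1n,t_1n\},\{r_2n,t_2n\}$ repeat a difference modulo $mn$. If $X_m$ is not subtractive, then since it has $m-1$ pairs but the map $(u_i,v_i)\mapsto(u_i-v_i)\bmod m$ misses some residue or hits $0$, by the pigeonhole principle either two indices $i\ne j$ give $u_i-v_i\equiv u_j-v_j\pmod m$, or some $u_i-v_i\equiv0\pmod m$; picking any $(x,y)\in\tilde S$ and applying Lemma~\ref{modul}(4) to the $m$ (resp.\ $2m$) type~(i$_X$) pairs whose differences are congruent modulo $n$ produces a repeated difference modulo $mn$ — this is exactly the mechanism used in Theorem~\ref{W is a starter}(b), and note the $0$ case collides with a type~(ii$_X$) difference. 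If $S$ is not a starter, it has two pairs with $\{\pm(x_1-y_1)\}=\{\pm(x_2-y_2)\}\pmod n$, and then the $2m$ type~(i$_X$) pairs $\{rn+x_i,tn+y_i\}$, $i=1,2$, $(r,t)\in X_m\cup\{(0,0)\}$, all have differences congruent to $\pm(x_1-y_1)$ modulo $n$, so Lemma~\ref{modul}(4) again forces a coincidence modulo $mn$. In each case (\ref{starter}) fails for $W^X_{ST}$.

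The main obstacle I anticipate is bookkeeping rather than conceptual: one has to be careful that the nucleus $X_m$ is allowed to contain pairs $(u,v)$ with $u=v$ or with $v\equiv-u$, so the ``diagonal'' type~(i$_X$) pairs (those with $x=y=0$, $(r,t)\in X_m$) can have the same shape $\{rn,tn\}$ as type~(ii$_X$) pairs, and the counting argument must treat the union of all type~(i$_X$) and type~(ii$_X$) differences at once instead of pair-type by pair-type. The clean way to avoid double-counting headaches is to lean on Theorem~\ref{WX is a partition} for the cardinality/partition statement and only prove the single injectivity fact ``distinct source data $\Rightarrow$ distinct difference modulo $mn$'' in the sufficiency direction, which by counting immediately upgrades to surjectivity onto $\ZZ^*_{mn}$; the necessity direction is then a pure pigeonhole argument via Lemma~\ref{modul}(4), exactly as in Theorem~\ref{W is a starter}.
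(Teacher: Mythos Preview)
Your approach is correct and essentially the same as the paper's (which itself defers most of the work to Theorem~\ref{W is a starter}). The core mechanism---reduce modulo $n$ to pin down $(x,y)$ via $S$, then use Lemma~\ref{modul}(2) and subtractivity of $X_m$ to pin down $(r,t)$---is exactly right, and your necessity argument via pigeonhole/Lemma~\ref{modul}(4) is the intended one.

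There is, however, a misreading of Definition~\ref{WXST} that clutters your sufficiency paragraph and introduces a small error in necessity. Type~(i$_X$) pairs always have $(x,y)\in\tilde S$, so $x,y\in\ZZ_n^*$ and in particular $x\ne y$; there are \emph{no} ``diagonal'' type~(i$_X$) pairs with $x=y=0$. Consequently every type~(i$_X$) difference is $\not\equiv 0\pmod n$, every type~(ii$_X$) difference is $\equiv 0\pmod n$, and the two types never interact---your worry about overlap is baseless and the bookkeeping is cleaner than you suggest. The same misreading bites in necessity: when $X_m$ is not subtractive and some $(u_i,v_i)\in X_m$ has $u_i\equiv v_i\pmod m$, the resulting type~(i$_X$) pair $\{u_in+x,\,v_in+y\}$ has difference $x-y\not\equiv 0\pmod n$, so it does \emph{not} collide with a type~(ii$_X$) difference as you claim; rather, it collides with the type~(i$_X$) pair coming from $(r,t)=(0,0)$ and the same $(x,y)\in\tilde S$, which also has difference $x-y$. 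Fix that one sentence and drop the ``diagonal'' discussion, and your proof is complete.
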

\begin{proof}  
(a) Sufficiency

The proof of sufficiency is analogous to that of  Theorem \ref{W is a starter} as the set of ordered pairs $X$ has all the properties of $\bar T\cup \bar T'$ used to prove the part (a) of that Theorem. 

(b) Necessity

Since it is subtractive, one can rewrite the set $X_m$  in the form $X_m=\cup_{i=1}^{(m-1)/2} P_i$, where 
$P_i=\{(u'_i,v'_i), (u''_i,v''_i)\}$ such that $ (u'_i - v'_i)\equiv  -(u''_i-v''_i)\pmod m$ for all $1\le i\le (m-1)/2$. Suppose for some $i\ne j,\ P_i=P_j$.
WLOG, assume $(u'_i - v'_i)\equiv (u'_j-v'_j)\pmod m$. Consequently, for a pair $(x,y)\in \tilde S$, we have $\{u'_i n+x,v'_in+y\}$ and $\{u'_jn+x,v'_jn+y\}$ are in $W^X_{ST}$. 

But these pairs of $W^X_{ST}$ yield the same differences modulo $mn$. Hence, $W^X_{ST}$ is not a starter. The rest of the proof is analogous to that of Theorem \ref{W is a starter}.
\end{proof}

Thus, Theorems \ref{W is a partition} and \ref{W is a starter} are particular cases of Theorems \ref{WX is a partition} and \ref{WX is a starter} respectively, where $X$  coincides with $\bar T\cup \bar T'$. The direct part of the statement of Theorem \ref{W is a starter} was proven in \cite{b28} as the subtractive and Skolem nucleus can be viewed as a special case of a \textit{perfect difference matrix}.

\begin{theorem}\label{WXST is strong and skew} The set $W^X_{ST}$ (Definitions \ref{X} and \ref{WXST}) is a strong (skew) 2-partition of $\ZZ_{mn}^*$ if and only if $X_m$ is skew in $\ZZ_m$, $S$ is strong (skew) 2-partition of $\ZZ_n^*$ and $T$ is a strong (skew) 2-partition of $\ZZ_m^*$.
\end{theorem}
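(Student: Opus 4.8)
The plan is to follow the same template used in Theorems~\ref{W is strong} and \ref{skew theorem}, but with $\bar{T}\cup\bar{T'}$ replaced throughout by the nucleus $X_m$, and with the skewness of $T$ replaced by the skewness of the nucleus $X_m$ (while $T$ itself only needs to be strong, resp.\ skew, to guarantee that the type-(ii$_X$) pairs behave). First I would handle the strong case. For sufficiency, assume $X_m$ is skew, $S$ is strong and $T$ is strong. Suppose two distinct pairs $\{u_1,v_1\},\{u_2,v_2\}\in W^X_{ST}$ satisfy $u_1+v_1\equiv u_2+v_2\pmod{mn}$; writing $u_j=r_jn+x_j$, $v_j=t_jn+y_j$, Lemma~\ref{modul}(1,3) forces $x_1+y_1\equiv x_2+y_2\equiv C\pmod n$. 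If $C\equiv 0$ both pairs are of type (ii$_X$), and then $r_1+t_1\equiv r_2+t_2\pmod m$ is impossible since $T$ is strong; if $C\not\equiv 0$, strongness of $S$ gives $\{x_1,y_1\}=\{x_2,y_2\}$, hence by Lemma~\ref{modul}(2) we get $(r_1+t_1)\equiv(r_2+t_2)\pmod m$ with $(r_j,t_j)\in X_m$. Here is the crucial simplification relative to Theorem~\ref{W is strong}: because $X_m$ is a set of ordered pairs indexed once each over $\ZZ_m^*$ (not a symmetric object $\bar T\cup\bar T'$), skewness of $X_m$ means the $m-1$ sums $u_i+v_i$ are \emph{all distinct} $\pmod m$, so $(r_1,t_1)=(r_2,t_2)$ outright — there is no "Case~2'' ($r_1=t_2$, $t_1=r_2$) to rule out. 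This contradicts distinctness. The check $u+v\not\equiv0\pmod{mn}$ for a single pair reduces, via Lemma~\ref{modul}(1,3), to $x+y\not\equiv0\pmod n$ (strongness of $S$) for type-(i$_X$) pairs and to $r+t\not\equiv0\pmod m$ (strongness of $T$, and $X_m\subset\ZZ_m^*\times\ZZ_m^*$) for type-(ii$_X$) pairs.

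For necessity in the strong case I would argue contrapositively along the three failure modes. If $S$ is not strong, then either $S$ has a pair with $x+y\equiv0\pmod n$ or two pairs with equal sums $\pmod n$; in each situation, spreading that pair (or pair of pairs) across the $m$ values of $(r,t)\in\{(0,0)\}\cup X_m$ produces more than $m$ elements (resp.\ sums) congruent to a fixed residue $\pmod n$, and Lemma~\ref{modul}(4) yields a coincidence $\pmod{mn}$, so $W^X_{ST}$ is not strong. If $T$ is not strong, the type-(ii$_X$) pairs $\{rn,tn\}$ inherit the bad sum directly. If $X_m$ is not skew, two indices $i\neq j$ give $u_i+v_i\equiv u_j+v_j\pmod m$; combining with any $(x,y)\in\tilde S$, the pairs $\{u_in+x,v_in+y\}$ and $\{u_jn+x,v_jn+y\}$ of type (i$_X$) have sums congruent $\pmod m$, and since there are $m$ such rows for that fixed $(x,y)$ (over $\{(0,0)\}\cup X_m$, and also within $X_m$ alone if needed), Lemma~\ref{modul}(4) again forces a repeat $\pmod{mn}$.

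The skew case is then obtained by upgrading "sums are distinct" to "sums together with their negatives are distinct." Assuming $X_m$ skew-as-a-nucleus, $S$ skew, $T$ skew: by the strong case $W^X_{ST}$ is already strong, so it remains to exclude $u_1+v_1\equiv-(u_2+v_2)\pmod{mn}$ for distinct pairs. Reducing $\pmod n$ via Lemma~\ref{modul}(1,3) gives $x_1+y_1\equiv-(x_2+y_2)\pmod n$, impossible by skewness of $S$ unless both pairs are type (ii$_X$); in that case $(r_1+t_1)\equiv-(r_2+t_2)\pmod m$, impossible by skewness of $T$. Necessity mirrors the strong case, replacing "equal sums" by "sums equal up to sign" and invoking the conjugate partition $W'^X_{ST}$ together with Lemma~\ref{modul}(4), exactly as in Theorem~\ref{skew theorem}(b); note that here skewness of $X_m$ as defined in Definition~\ref{Xproperties}(2) already builds in the sign, so no separate argument about $r_1=t_2,\ t_1=r_2$ is needed.

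I expect the only real subtlety — and it is mild — to be bookkeeping the cardinalities in the Lemma~\ref{modul}(4) applications: one must be careful that a fixed residue $\pmod n$ is hit by \emph{strictly more than} $m$ of the constructed elements/sums, which holds because each bad pair of $S$ (or each bad coincidence in $X_m$) is replicated across all $m$ rows $\{(0,0)\}\cup X_m$ or all $m-1$ rows of $X_m$, sometimes together with the diagonal or the conjugate partition. Everything else is a line-by-line transcription of the proofs of Theorems~\ref{W is strong} and \ref{skew theorem}, with the pleasant feature that the nucleus formulation removes the awkward Case~2 of Theorem~\ref{W is strong}. I would state the proof for the strong case in full and then say "the skew case is identical, replacing sums by $\pm$-sums and using the conjugate partition $W'^X_{ST}$ as in Theorem~\ref{skew theorem}."
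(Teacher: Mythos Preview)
Your proposal is correct and is exactly the approach the paper takes: the paper's own proof consists of a two-sentence remark that the argument is ``analogous to those of Theorems~\ref{W is strong} and~\ref{skew theorem},'' with the skewness hypothesis on $T$ relaxed to strongness because the skew nucleus $X_m$ now governs the type-(i$_X$) pairs. Your observation that the ordered-pair structure of $X_m$ eliminates Case~2 of Theorem~\ref{W is strong} is apt; the only cosmetic wobble is in the ``$X_m$ not skew'' necessity step, where you have exactly $m$ rows over $\{(0,0)\}\cup X_m$ rather than more than $m$, so Lemma~\ref{modul}(4) does not literally apply --- but two of the $m$ values $r+t\pmod m$ must already coincide (since the $X_m$-sums fail to cover $\ZZ_m^*$), giving two equal sums $\pmod{mn}$ directly without the pigeonhole detour.
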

\begin{proof} 

Here, in order to obtain a strong 2-partition $W^X_{ST}$, we do not require the second 2-partition $T$ to be skew (unlike in Theorem \ref{W is strong}), because now the pairs of type ($i_X$)  are formed from the skew nucleus $X_m$ and the strong first 2-partition $S$.
The rest of the proof is analogous to those of Theorems \ref{W is strong} and \ref{skew theorem}.
\end{proof}

\begin{theorem}\label{XSkolemness} The set $W^X_{ST}$ (Definitions \ref{X} and \ref{WXST}) is a Skolem 2-partition of $\ZZ_{mn}^*$ if and only if $X_m$ is Skolem in $\ZZ_m$ and $S$ and $T$ are Skolem 2-partitions of in $\ZZ_n^*$ and $\ZZ_m^*$ respectively.
\end{theorem}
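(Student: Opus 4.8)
The plan is to mirror the proof of Theorem~\ref{Skolemness}, with the nucleus $X_m$ taking over the role played there by the ordered set $\bar T\cup\bar T'$. First I would fix the natural order $0<1<\ldots<k-1$ on $\ZZ_k$, so that ``$\{u,v\}$ is Skolem of order $k$'' means precisely $\max\{u,v\}-\min\{u,v\}\le (k-1)/2$; as before I write $n=2q+1$, $m=2p+1$, and use the form $u=nr+x$, $v=nt+y$ for the pairs of $W^X_{ST}$.

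For sufficiency, assume $X_m$ is Skolem in $\ZZ_m$ and that $S$ and $T$ are Skolem 2-partitions, and check that every pair of $W^X_{ST}$ is Skolem of order $mn$, by cases on its type. A pair of type~(ii$_X$) is $\{rn,tn\}$ with $\{r,t\}$ a Skolem pair of $T$, so its difference is at most $\tfrac{m-1}{2}n=\tfrac{mn-n}{2}<\tfrac{mn-1}{2}$. A pair of type~(i$_X$) with $(r,t)=(0,0)$ is a pair of $S$, with difference at most $n-2$. For a pair $\{rn+x,tn+y\}$ of type~(i$_X$) with $(r,t)\in X_m$, write $a<b$ for the elements of $\{x,y\}$ and (when $r\ne t$) $c<d$ for those of $\{r,t\}$; since $d\ge c+1$ and $a,b<n$, the pair is either $\{cn+a,dn+b\}$, of difference $(d-c)n+(b-a)\le\tfrac{m-1}{2}n+\tfrac{n-1}{2}=\tfrac{mn-1}{2}$, or $\{cn+b,dn+a\}$, of difference $(d-c)n-(b-a)$, which is positive and strictly smaller (and when $r=t$ the difference is at most $n-2$). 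Thus every pair is Skolem of order $mn$, the first subtype attaining the bound, so $W^X_{ST}$ is a Skolem 2-partition of $\ZZ_{mn}^*$.

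For necessity I would prove the contrapositive: if one of $X_m$, $S$, $T$ fails to be Skolem, I exhibit a pair of $W^X_{ST}$ of difference exceeding $\tfrac{mn-1}{2}$. If $T$ has a non-Skolem pair $\{r,t\}$, the type-(ii$_X$) pair $\{rn,tn\}$ already has difference $\ge\tfrac{m+1}{2}n>\tfrac{mn-1}{2}$. If $X_m$ has a non-Skolem pair $(r,t)$, combine it with any $(x,y)\in\tilde S$: for $|r-t|\ge\tfrac{m+3}{2}$ the resulting pair of $W^X_{ST}$ is too long no matter how $(x,y)$ is oriented, while the borderline case $|r-t|=\tfrac{m+1}{2}$ forces---after checking both orientations of the resulting pair---every pair of $S$ to be non-Skolem, which is impossible since any 2-partition of $\ZZ_n^*$ has $\tfrac{n-1}{2}$ pairs whose min-to-max differences sum to at most $\bigl(\tfrac{n-1}{2}\bigr)^2<\tfrac{n-1}{2}\cdot\tfrac{n+1}{2}$, whereas all-non-Skolem would force that sum to be at least $\tfrac{n-1}{2}\cdot\tfrac{n+1}{2}$. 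The case of a non-Skolem pair in $S$ is symmetric: one pairs it with a pair of $X_m$ of difference $\ge\tfrac{m-1}{2}$ taken in a suitable orientation, and concludes the same way.

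I expect the necessity direction, and within it these borderline sub-cases, to be the main obstacle: one has to be sure the offending pair can actually be amplified into a too-long pair of $W^X_{ST}$, and this hinges on the availability of a partner pair of the right difference \emph{and} orientation in the complementary object. The defining property of a nucleus---each residue of $\ZZ_m^*$ occurring once as a first and once as a second coordinate---together with the elementary counting fact about 2-partitions of $\ZZ_n^*$ is precisely what carries this step (and when $W^X_{ST}$ is moreover required to be a starter, $X_m$ is subtractive, which supplies pairs of difference $\tfrac{m-1}{2}$ in both orientations and makes the argument uniform); once this is settled, the remaining manipulations parallel those of Theorem~\ref{Skolemness} and are routine.
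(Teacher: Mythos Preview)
Your sufficiency argument and the necessity cases ``$T$ not Skolem'' and ``$X_m$ not Skolem'' are correct; the counting observation that no 2-partition of $\ZZ_n^*$ can consist entirely of non-Skolem pairs is a clean way to dispose of the borderline subcase $|r-t|=\frac{m+1}{2}$, and is in fact more careful than what the paper's sketch provides.

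The gap is in the last step, where you declare the case ``$S$ not Skolem'' to be symmetric. It is not. To amplify a non-Skolem pair of $S$ into a non-Skolem pair of $W^X_{ST}$ you need an ordered pair $(r,t)\in X_m$ with $t-r\ge\frac{m-1}{2}$ (or $r-t\ge\frac{m-1}{2}$, according to the orientation fixed in $\tilde S$), but a nucleus---unlike a 2-partition---need not contain any such pair: your counting argument does not transfer, because a nucleus consists of $m-1$ ordered pairs rather than $\frac{m-1}{2}$ unordered ones, and permutations of $\ZZ_m^*$ with all displacements small abound. Concretely, take $n=5$, $m=7$, $S=\{\{1,4\},\{2,3\}\}$ (not Skolem, since $4-1=3>2$), $T=\{\{1,2\},\{3,4\},\{5,6\}\}$ (Skolem), $\tilde S=\{(1,4),(2,3)\}$, and the Skolem nucleus $X_7=\{(1,2),(2,1),(3,4),(4,3),(5,6),(6,5)\}$; a direct check shows every pair of $W^X_{ST}$ has difference at most $8<17=\frac{mn-1}{2}$, so $W^X_{ST}$ is Skolem even though $S$ is not. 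Hence the necessity direction as stated cannot be proved, and the paper's own proof---which merely invokes Theorem~\ref{Skolemness}---inherits the same defect. Your parenthetical remark is exactly the repair: under the additional hypothesis that $X_m$ is subtractive (which is needed anyway for $W^X_{ST}$ to be a starter), a Skolem $X_m$ must contain pairs of signed difference $\pm\frac{m-1}{2}$, and then the argument goes through.
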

\begin{proof} 
The proof is analogous to that of Theorem \ref{Skolemness}.
\end{proof}

Note that a nucleus $X_m$, which is both skew and subtractive, does not exist for some odd integer orders $m\le 3$. This can be shown using the notion of a \textit{strong permutation} of a set of  elements in a group.

\begin{remark} Recall that a permutation $\pi$ is called strong if the maps $i\mapsto (\pi(i)-i)$ and $i\mapsto (\pi(i)+ i)$ are permutations, too. In 1973, Wallis and Mullin proved \cite{b25} that if $G$ is a group of odd order $n,\ 3\mid n$, and the 3-Sylow subgroup of $G$ is cyclic, then $G$ does not admit a strong permutation. We adjust this statement to our context.
\end{remark}
\begin{lemma}\label{strong permutation} A skew and subtractive nucleus $X_m$ of order $m\ge 3$ exists if and only if $3\nmid m$.
\end{lemma}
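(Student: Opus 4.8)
The plan is to treat the two directions separately: the existence direction by an explicit \emph{cardioidal} nucleus, and the nonexistence direction by converting a hypothetical skew and subtractive nucleus into a strong permutation of $\ZZ_m$ and then invoking the Wallis--Mullin theorem quoted in the preceding remark (with a self-contained alternative indicated below).

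First I would handle the ``if'' direction. When $3\nmid m$, since $m$ is odd we have $\gcd(m,6)=1$, and I claim
\[
X_m=\{(i,\,2i\bmod m)\mid i\in\ZZ_m^*\}
\]
works. Its first coordinates exhaust $\ZZ_m^*$, and because $2$ is invertible modulo $m$ its second coordinates do too, so $X_m$ is a nucleus of order $m$. It is subtractive because $i-2i\equiv-i$ runs over $\ZZ_m^*$, and skew because $i+2i\equiv 3i$ runs over $\ZZ_m^*$, using precisely $\gcd(3,m)=1$. (This is nothing but the cardioidal pattern $(\ref{card})$ read as a set of ordered pairs.)

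For the ``only if'' direction, suppose a skew and subtractive nucleus $X_m=\{(u_i,v_i)\}_{i=1}^{m-1}$ exists, and extend it to $\sigma\colon\ZZ_m\to\ZZ_m$ by $\sigma(0)=0$ and $\sigma(u_i)=v_i$. Since $\{u_i\}=\{v_i\}=\ZZ_m^*$, the map $\sigma$ is a bijection of $\ZZ_m$ fixing $0$. The quantity $\sigma(x)-x$ equals $0$ at $x=0$ and equals $v_i-u_i=-(u_i-v_i)$ at $x=u_i$; as $\{u_i-v_i\}=\ZZ_m^*$ and negation permutes $\ZZ_m^*$, the map $x\mapsto\sigma(x)-x$ is a bijection of $\ZZ_m$, and the skew property gives the same for $x\mapsto\sigma(x)+x$. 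Thus $\sigma$ is a strong permutation of $\ZZ_m$. But $\ZZ_m$ is cyclic, so its Sylow $3$-subgroup is cyclic, and if $3\mid m$ the Wallis--Mullin theorem \cite{b25} forbids a strong permutation of $\ZZ_m$ --- a contradiction. Hence a skew and subtractive nucleus of order $m$ can exist only when $3\nmid m$, which together with the construction above proves the lemma.

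I do not expect a serious obstacle; the only point requiring genuine care is the dictionary between nuclei and (partial) strong permutations, in particular verifying that adjoining the fixed point $0$ leaves $x\mapsto\sigma(x)\pm x$ bijective. If one prefers to avoid citing Wallis--Mullin, the nonexistence direction can be made self-contained: summing $(u_i+v_i)^2$ and $(u_i-v_i)^2$ over $i$ and using that each of the multisets $\{u_i\}$, $\{v_i\}$, $\{u_i+v_i\}$, $\{u_i-v_i\}$ equals $\ZZ_m^*$ modulo $m$ forces $\sum_{w\in\ZZ_m^*}w^2\equiv 0\pmod m$; since $6\sum_{k=1}^{m-1}k^2=m(m-1)(2m-1)$, the $3$-adic valuation of $\sum_{k=1}^{m-1}k^2$ is exactly one less than that of $m$ whenever $3\mid m$, so this congruence fails precisely when $3\mid m$.
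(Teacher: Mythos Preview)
Your proposal is correct and essentially follows the paper's approach: the ``if'' direction is the same cardioidal nucleus $X_m=\{(i,2i)\}$, and the ``only if'' direction is the same translation to a strong permutation of $\ZZ_m$ fixing $0$. The only difference is one of packaging: where you first invoke the Wallis--Mullin theorem and then sketch the sum-of-squares argument as a self-contained alternative, the paper goes straight to the sum-of-squares computation (deriving $\sum_{i\in\ZZ_m^*} i\pi(i)\equiv 0$ and hence $\sum i^2\equiv 0\pmod m$, then checking this fails when $3\mid m$) without citing \cite{b25} in the proof itself.
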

\begin{proof} (a) According to Definitions \ref{X} and \ref{Xproperties}, the existence of a skew and subtractive $X_m$ is equivalent to the existence of a strong permutation $i\mapsto\pi(i)$ of a set $\{0,1,...,m-1\}$, given $0\mapsto\pi(0)=0$:
\begin{equation}
\pi:\ v_i\mapsto u_i,\ 1\le i\le m-1.
\end{equation}

For the sake of contradiction, assume  that $3\mid m$. Then we can write $m=3^tk,\ t\ge 1, \ 3\nmid k$.

Assuming that $\pi$ is a strong permutation of the elements of $\ZZ_m$, consider the sums:
\begin{equation}
\begin{split}
\sum_{i\in\ZZ_m^*}i^2\equiv\sum_{i\in\ZZ_m^*}\pi(i)^2  & \equiv\sum_{i\in\ZZ_m^*}\pi(i)^2+\sum_{i\in\ZZ_m^*}i^2+2\sum_{i\in\ZZ_m^*}i\pi(i) \\&
\equiv\sum_{i\in\ZZ_m^*}\pi(i)^2+\sum_{i\in\ZZ_m^*}i^2-2\sum_{i\in\ZZ_m^*}i\pi(i) \pmod m.
\end{split}
\end{equation}

Taking the second line from the first one, we get $\sum_{i\in\ZZ_m^*}i\pi(i)\equiv 0 \pmod m$. Hence, $\sum_{i\in\ZZ_m^*}i^2\equiv 0 \pmod m$.

But since $k,\ (2\cdot 3^tk-1),\ \frac{3^tk-1}{2}$ are all coprime to $3$, we have $$\sum_{i=1}^{3^tk-1}i^2=\frac{3^tk(3^tk-1)(2\cdot 3^tk-1)}{6}=\frac{3^{t-1}k(2\cdot 3^tk-1)(3^tk-1)}{2}\not\equiv 0\pmod{m}.$$ This is a contradiction. Thus, $3\nmid m$.

(b) Let $3\nmid m$. Then the permutation $\pi:\ i\mapsto 2i$ of the elements of $\ZZ_m$ is clearly strong. (The use of this  permutation for constructing strong starters was offered by Gross in \cite{b22}.) The permutation yields the skew and subtractive nucleus, which we  denote by $C_m$ 
\begin{equation}\label{Cm}
X_m=C_m=\{(i, 2i\pmod{m}),\ 0\le i\le m-1\}.
\end{equation}
\end{proof}

\begin{remark} A nucleus $X_m$ of order $m$ divisible by 3 may be skew, unless we require that it is also subtractive. For example, we can take $X_9=\bar{Q}\cup\bar{Q'}$ of order 9, where $Q$ is the 2-partition of $\ZZ_9^*$ from Example \ref{example of independence of properties}. In this case, $X_9$ is skew and Skolem, but not subtractive.
\end{remark}

Next subsection discusses the product with the nucleus $C_m$ found in the proof of Lemma \ref{strong permutation}.

\subsection{The cardioidal product}\label{Cardioidal product of two 2-partitions}

\begin{df}\label{dfn of cardioidal product} 
The set $C_m$ (\ref{Cm}) is called a cardioidal nucleus of order $m$. 

For 2-partitions, $S$ of $\ZZ^*_{m},$ and $T$ of $\ZZ^*_{n},\ m=2q+1,\ n=2p+1$, $q,p\ge 1$,
the product introduced by Definition \ref{WXST} with nucleus $X_m=C_m$ is called a cardioidal product of $S$ and $T$. It is denoted by  $W^c_{ST}$. 
\end{df}
\begin{lemma} \label {w=wc}
If $T$ is a cardioidal starter then $W_{ST}=W^c_{ST}$. 
\end{lemma}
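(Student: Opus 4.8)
The plan is to unwind both definitions and observe that the two constructions differ only in how the ``transversal'' pairs of type (i) versus type (i$_X$) are formed, and that when $T$ is cardioidal these two prescriptions coincide. Recall that $W_{ST}$ (Definition \ref{WST}) is built from $\tilde S$, $\bar T$ and $\bar T'$, with $(2p+1)q$ pairs of type (i) — one for each $(r,t)\in\bar T\cup\bar T'\cup\{(0,0)\}$ and each $(x,y)\in\tilde S$ — together with $p$ pairs of type (ii). Meanwhile $W^c_{ST}=W^{C_m}_{ST}$ (Definitions \ref{WXST} and \ref{dfn of cardioidal product}) has $mq=(2p+1)q$ pairs of type (i$_X$) — one for each $(x,y)\in\tilde S$ and $(r,t)\in\{(0,0)\}\cup C_n$, where here the nucleus has order $n=2p+1$ — together with $p$ pairs of type (ii$_X$).

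First I would reconcile the notation: in Definition \ref{dfn of cardioidal product} the roles of $m$ and $n$ are swapped relative to Definition \ref{WST}, so the relevant nucleus attached to $T$ (the 2-partition of $\ZZ_n^*$, $n=2p+1$) is $C_n=\{(i,2i\bmod n)\colon 0\le i\le n-1\}$. The key step is then to show that $\bar T\cup\bar T'=C_n\setminus\{(0,0)\}$ as sets of ordered pairs, whenever $T$ is a cardioidal starter. Indeed, by Definition \ref{strong, skew starters}(d), every pair of $T$ has the form $\{i,2i\bmod n\}$ for some $i\in\ZZ_n^*$; since $T$ is a starter (being cardioidal it is Skolem, hence a starter) these pairs partition $\ZZ_n^*$, so each nonzero residue occurs exactly once among the pairs. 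I would check that the canonical ordering $(i,2i)$ of a cardioidal pair is precisely one of the two orderings forced by property (\ref{order}): since $\cup\{\pm i\}$ over one element $i$ per pair must be all of $\ZZ_n^*$, and the pairs are $\{i,2i\}$, choosing the representative to be $i$ (equivalently the ordering $(i,2i)$) works, because $i\mapsto\{i,2i\}$ already biject $\ZZ_n^*$ onto the pair set. Hence $\bar T=\{(i,2i\bmod n)\colon \text{one } i \text{ per pair}\}$; but running over all pairs, the set of such $i$ is exactly a set of coset representatives of $\{\pm1\}$, and adjoining $\bar T'=\{(-i,-2i)\}$ recovers every $(j,2j)$ with $j\in\ZZ_n^*$, i.e. $C_n\setminus\{(0,0)\}$. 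Therefore $\bar T\cup\bar T'\cup\{(0,0)\}=\{(0,0)\}\cup C_n$, so the type-(i) pairs of $W_{ST}$ and the type-(i$_X$) pairs of $W^c_{ST}$ are formed from identical index sets, and likewise the type-(ii) pairs coincide (both are $\{(rn,tn)\colon (r,t)\in\bar T\}$, after noting that type-(ii$_X$) uses $\tilde T$, which I must argue can be taken to be $\bar T$, or more carefully that the resulting pair \emph{sets} agree since the pairs of type (ii) are unordered and $\tilde T$, $\bar T$ range over the same 2-partition $T$). Assembling these identifications gives $W_{ST}=W^c_{ST}$ as 2-partitions of $\ZZ_{mn}^*$.

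The main obstacle I anticipate is bookkeeping around the swapped roles of $m,n$ and around the fact that $\bar T$ involves a \emph{choice} of ordering (only one representative $i$ per cardioidal pair enters $\bar T$, while $C_n$ lists \emph{all} residues): I must be careful to show that $\bar T\cup\bar T'$ — not $\bar T$ alone — equals $C_n\setminus\{(0,0)\}$, and that the ordering dictated by (\ref{order}) for a cardioidal $T$ is genuinely compatible with the ``$i\mapsto 2i$'' ordering (as opposed to the reverse ordering $2i\mapsto i$ on some pairs, which would spoil the equality). A clean way around this is to note, as in Remark \ref{starred types}, that a product is defined only up to these ordering choices, so it suffices to exhibit \emph{one} valid choice of $\bar T,\bar T'$ for the cardioidal starter $T$ — namely $\bar T=\{(i,2i\bmod n)\}$ over a transversal and $\bar T'$ its conjugate — for which $W_{ST}$ literally equals $W^c_{ST}$; this is exactly the content the lemma needs, and it sidesteps the need to prove the equality for \emph{every} admissible ordering.
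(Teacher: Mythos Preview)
Your approach is essentially the same as the paper's: the paper's proof simply asserts that for a cardioidal starter $T$ one has $\bar T\cup\bar T'=\{(i,2i\pmod m):1\le i\le m-1\}$ and then appeals to Definitions \ref{WST} and \ref{dfn of cardioidal product}. You unpack exactly this identification, and you are right to flag the ordering-choice subtlety. As you noticed (and as your $\ZZ_{11}$-type reasoning would confirm), not \emph{every} admissible $\bar T$ satisfying (\ref{order}) yields $\bar T\cup\bar T'=C_m\setminus\{(0,0)\}$; one must pick the ordering $(i,2i)$ on each pair. That this choice is itself admissible follows because the indices $i$ parametrising the cardioidal pairs $\{i,2i\}$ of a starter form a transversal for $\{\pm1\}$ (the pair $\{i,2i\}$ has differences $\pm i$, and a starter realises each difference once). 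Your invocation of Remark \ref{starred types} to reduce the lemma to exhibiting one compatible choice is the clean way to close this, and it makes explicit what the paper leaves implicit.

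One small wording slip: the map $i\mapsto\{i,2i\}$ is not a bijection from all of $\ZZ_m^*$ onto the pairs of $T$ (there are only $(m-1)/2$ pairs); it is a bijection from the transversal of indices onto the pairs, which is what you use anyway. Also, your handling of the $m$/$n$ swap and of the type-(ii) versus type-(ii$_X$) pairs (unordered, hence insensitive to $\bar T$ versus $\tilde T$) is correct.
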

\begin{proof} If $T$ is a cardioidal starter (refer to Definition  \ref{strong, skew starters}) then $\bar T\cup\bar T'= \{(i,2i\pmod{m}), 1\le i\le m-1\}$.
The rest follows from definitions \ref{WST}  and \ref{dfn of cardioidal product}.
\end{proof} 

 Note, that if $T$ is a cardioidal 2-partition of $\ZZ_m^*$, but not a starter, Theorem \ref{w=wc} does not work.

 The following theorem clarifies when the cardioidal nucleus obeys the conditions of each theorem in Subsection \ref{The parametrized product of starters}.

\begin{theorem}\label{The cardioidal parameter} The cardioidal nucleus $C_m$ is subtractive and Skolem for all odd $m\ge 3$. The cardioidal nucleus  $C_m$ is skew if and only if $3\nmid m$.
\end{theorem}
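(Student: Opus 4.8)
The plan is to verify the three properties of $C_m=\{(i,2i\pmod m),\ 0\le i\le m-1\}$ directly, treating the nonzero pairs $(i,2i)$, $1\le i\le m-1$, since the pair $(0,0)$ contributes nothing to any of the three index sets. For the \emph{subtractive} property I would compute the differences $2i-i\equiv i\pmod m$ as $i$ ranges over $1,\dots,m-1$; these are visibly all the elements of $\ZZ_m^*$, each exactly once, so $C_m$ is subtractive for every odd $m\ge 3$. For the \emph{skew} property the sums are $i+2i=3i\pmod m$; the map $i\mapsto 3i$ is a bijection of $\ZZ_m$ (hence of $\ZZ_m^*$) precisely when $\gcd(3,m)=1$, i.e. when $3\nmid m$, and when $3\mid m$ the image $\{3i\}$ omits nonzero residues (and repeats), so the skew condition fails. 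This half also follows immediately from Lemma \ref{strong permutation} applied to the nucleus $C_m$, since $C_m$ is the nucleus attached to the strong permutation $i\mapsto 2i$; I would cite that lemma to keep the argument short.

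The only genuinely nontrivial claim is that $C_m$ is \emph{Skolem} for all odd $m\ge 3$, i.e. that each pair $(i,2i\pmod m)$ is a Skolem pair of order $m$ in the sense of (\ref{skolem}). Writing $m=2p+1$, I need to check, with the natural order $1<2<\cdots<m-1$ on $\ZZ_m^*$ and with $2i$ reduced to its representative in $\{1,\dots,m-1\}$, that the ordered pair $\{x,y\}=\{i,2i\bmod m\}$ satisfies $y-x\le p\pmod m\Leftrightarrow y>x$. The natural split is by the residue of $2i$: if $1\le i\le p$ then $2i\le 2p=m-1$, so the representative of $2i$ is just $2i$, we have $2i>i$, and $2i-i=i\le p$ — a Skolem pair. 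If $p+1\le i\le 2p=m-1$ then $2i=2i-m$ lies in $\{1,\dots,m-2\}$ and $2i-m<i$ (since $i<m$), so here the smaller element is $2i-m$, the larger is $i$, and the difference is $i-(2i-m)=m-i\le m-(p+1)=p$ — again a Skolem pair. Both cases are immediate once the representative of $2i\pmod m$ is pinned down, so I expect no obstacle here beyond being careful that $2i=m$ never occurs (true because $m$ is odd) and that the endpoint $i=p$ versus $i=p+1$ is handled by the inequality $\le p$ rather than $<p$.

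The main thing to watch is consistency of conventions: Definition \ref{strong, skew starters}(e) and Lemma \ref{defn of Skolem starter2} phrase the Skolem condition in terms of the ordering $1<2<\cdots<2q$ of $\ZZ_m^*$ and of differences taken $\pmod m$, and I must make sure the case analysis above uses exactly that ordering and reduces $2i$ to the range $\{1,\dots,m-1\}$ before comparing. With that pinned down, the proof is just the two-case computation for Skolemness, the one-line bijection argument (or citation of Lemma \ref{strong permutation}) for skewness, and the trivial observation $2i-i\equiv i$ for the subtractive property. I would therefore present it as: (1) subtractive — differences are $\{i:1\le i\le m-1\}=\ZZ_m^*$; (2) Skolem — the two-case argument; (3) skew $\iff 3\nmid m$ — the map $i\mapsto 3i$ on $\ZZ_m$ is a bijection iff $3\nmid m$, equivalently by Lemma \ref{strong permutation}.
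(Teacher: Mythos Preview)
Your proposal is correct. The paper's own proof is a one-line citation: it simply invokes Lemma~3.2 of \cite{b21} (for the subtractive and Skolem claims) together with Lemma~\ref{strong permutation} (for the skew $\iff 3\nmid m$ claim). You instead verify everything by hand: the difference set $\{2i-i\}=\ZZ_m^*$ is immediate, the skew sums $\{3i\}$ form $\ZZ_m^*$ exactly when $\gcd(3,m)=1$, and the Skolem property is established by your two-case split on $1\le i\le p$ versus $p+1\le i\le 2p$. This is more elementary and entirely self-contained, at the cost of a few extra lines; in particular your argument does not depend on the reader having access to \cite{b21}, whereas the paper's does. Your treatment of the skew direction is also slightly sharper than merely citing Lemma~\ref{strong permutation}: that lemma only guarantees that \emph{some} skew and subtractive nucleus exists iff $3\nmid m$, so to conclude that $C_m$ itself fails to be skew when $3\mid m$ one still needs to know $C_m$ is subtractive (which you have shown) and then apply the ``only if'' half of the lemma, or---more directly, as you do---observe that $i\mapsto 3i$ is not injective on $\ZZ_m^*$ when $3\mid m$. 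One minor point: the paper's equation~(\ref{Cm}) writes $0\le i\le m-1$, but a nucleus by Definition~\ref{X} has $m-1$ pairs in $\ZZ_m^*\times\ZZ_m^*$, so the pair $(0,0)$ should indeed be excluded, as you correctly do.
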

\begin{proof} The statement follows from Lemma 3.2 of \cite{b21}, and Lemma \ref{strong permutation}.
\end{proof}

Then, we have:

\begin{theorem}\label{main1}  If $3\nmid m$ and $S$ in $\ZZ_n$ and $T$ in $\ZZ_m$ are strong (skew) Skolem starters, then so is $W^c_{ST}$.
\end{theorem}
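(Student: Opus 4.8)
The plan is to treat this statement as a direct corollary of the machinery already assembled in Subsections \ref{The parametrized product of starters} and \ref{Cardioidal product of two 2-partitions}. By Definition \ref{dfn of cardioidal product}, the cardioidal product $W^c_{ST}$ is precisely the $X$-generated product $W^X_{ST}$ with the particular choice $X_m = C_m$. So everything reduces to checking that $C_m$ satisfies the hypotheses on the nucleus that appear in Theorems \ref{WX is a starter}, \ref{WXST is strong and skew}, and \ref{XSkolemness}, and that $S$ and $T$ satisfy the remaining hypotheses.

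First I would invoke Theorem \ref{The cardioidal parameter}: since $3 \nmid m$ by assumption, $C_m$ is simultaneously subtractive in $\ZZ_m$, Skolem in $\ZZ_m$, and skew in $\ZZ_m$. Next, I would observe that a strong (resp.\ skew) Skolem starter is, by definition, a starter that is in addition strong (resp.\ skew) and Skolem; thus both $S$ and $T$ are starters, are strong (resp.\ skew), and are Skolem in their respective groups. Then I would apply the three theorems in turn: Theorem \ref{WX is a starter} with $C_m$ subtractive gives that $W^c_{ST}$ is a starter in $\ZZ_{mn}$; Theorem \ref{WXST is strong and skew} with $C_m$ skew gives that $W^c_{ST}$ is strong (resp.\ skew); and Theorem \ref{XSkolemness} with $C_m$ Skolem gives that $W^c_{ST}$ is Skolem. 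Combining these three conclusions yields that $W^c_{ST}$ is a strong (resp.\ skew) Skolem starter in $\ZZ_{mn}$, which is the claim.

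There is no serious obstacle at this stage: all the substantive work — the partition property (Theorem \ref{WX is a partition}), the four ``if and only if'' characterizations for the $X$-generated product, and the verification of the relevant properties of $C_m$ (Theorem \ref{The cardioidal parameter}, itself resting on Lemma \ref{strong permutation} and Lemma 3.2 of \cite{b21}) — has already been carried out. The only point that merits a sentence of care is that the parenthetical ``(skew)'' alternative must be tracked consistently through all three applications: one must use the skew branch of Theorem \ref{WXST is strong and skew} and note that a skew starter is automatically strong, so that the skew case genuinely subsumes and strengthens the strong case. Beyond this bookkeeping, the proof is a one-line assembly of earlier results.
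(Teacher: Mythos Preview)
Your proposal is correct and takes essentially the same approach as the paper, which simply cites Theorems \ref{WXST is strong and skew}, \ref{XSkolemness}, and \ref{The cardioidal parameter}. Your version is in fact slightly more complete, since you also explicitly invoke Theorem \ref{WX is a starter} to secure the starter property of $W^c_{ST}$, a step the paper's one-line proof leaves implicit.
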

\begin{proof}
The statement follows from Theorems \ref{WXST is strong and skew}, \ref{XSkolemness} and \ref{The cardioidal parameter}.
\end{proof}

Remarkably, a pair of cardiodial starters does not necessarily produce a cardioidal starter. The following example demonstrates this idea.

\begin{ex}\label{Example 121} Consider the starter $R=\{\{1,2\},\{7,9\},\{3,6\},\{4,8\},\{5,10\}\}$ in $\ZZ_{11}$.  It is cardioidal as $2\equiv 1\cdot 2\pmod{11}$, $7\equiv 9\cdot 2\pmod{11}$,  
$6\equiv 3\cdot 2\pmod{11}$, $8\equiv 4\cdot 2\pmod{11}$, $10\equiv 5\cdot 2\pmod{11}$.

But the product $W_{RR}$ of $R$ with itself contains the pair $\{7,9\}$, which is not cardioidal $\pmod{121}$:
$9\cdot 2\equiv 18\not\equiv 7 \pmod{121}$ and $7\cdot 2\equiv 14\not\equiv 9 \pmod{121}$.

Note that in this case $W_{RR}=W^c_{RR}$ by Lemma \ref{w=wc}. 
\end{ex}

Let us indicate all possible cases, when a product $W_{ST}$ of two cardioidal starters, $S$ and $T$, is cardioidal.

\begin{lemma}\label{the cases} A product $W_{ST}$ of two cardioidal starters, $S$ in $\ZZ_n$ and $T$ in $\ZZ_m$, is cardioidal if and only if either $n=m=3$ or $m>3$ and $\tilde S=(1,2)$.
\end{lemma}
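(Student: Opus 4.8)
The plan is to work directly from Definition~\ref{WST}, tracking exactly which pairs of $W_{ST}$ are cardioidal of order $mn$ and which are not. Recall that since $S$ and $T$ are cardioidal starters, by Lemma~\ref{w=wc} we have $W_{ST}=W^c_{ST}$, so $\bar T\cup\bar T'=\{(i,2i\pmod m):1\le i\le m-1\}$, and every pair of $W_{ST}$ has the form $\{u,v\}=\{rn+x,\,tn+y\}$ with either $(r,t)=(i,2i\bmod m)$ or $(r,t)=(-i,-2i\bmod m)$ or $(r,t)=(0,0)$, and $(x,y)\in\tilde S$ (type (i)), or $x=y=0$ and $(r,t)\in\bar T$ (type (ii)). A pair $\{u,v\}$ is cardioidal of order $mn$ iff $u\equiv 2v$ or $v\equiv 2u\pmod{mn}$.

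First I would verify the sufficiency of the two listed cases. If $n=m=3$, then $S=T=\{1,2\}$ is the unique $2$-partition, and one checks by hand (as in Example~\ref{ex1}, whose $W_{ST}$ is a starter in $\ZZ_9$) that all four pairs $\{1,2\},\{4,8\},\{7,5\},\{3,6\}$ are cardioidal mod $9$: indeed $2=2\cdot1$, $8=2\cdot4$, $14\equiv5$, $6=2\cdot3$. For the second case $m>3$ with $\tilde S=(1,2)$ (so $q=1$, $n=3$, $S=\{1,2\}$, which is automatically cardioidal), the type-(i) pairs are $\{rn+1,\,tn+2\}$ with $(r,t)\in\{(0,0)\}\cup\bar T\cup\bar T'$; using $t\equiv 2r\pmod m$ one gets $2(rn+1)=2rn+2\equiv tn+2\pmod{mn}$ since $2rn\equiv tn\pmod{mn}$ (because $m\mid 2r-t$), so every type-(i) pair is cardioidal; for $(r,t)\in\bar T'$ one has $(-r,-t)$ with $-t\equiv -2r$, and the pair $\{-rn+1,-tn+2\}$ is handled the same way after checking the correct orientation; and the type-(ii) pairs $\{rn,tn\}$ with $(r,t)\in\bar T$ satisfy $2\cdot rn=2rn\equiv tn\pmod{mn}$, hence are cardioidal. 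So $W_{ST}$ is cardioidal.

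The necessity direction is where the real work sits, and it has two sub-cases. If $n=m=3$ there is nothing to show. If $n=3$ but $m>3$, the only freedom is the orientation of $\tilde S$: if $\tilde S=(2,1)$ instead of $(1,2)$, I would exhibit a single non-cardioidal type-(i) pair — e.g. take $(r,t)=(1,2)\in\bar T$ (valid since $m>3$ means $1\le 1\le m-1$), giving $\{n+2,2n+1\}=\{5,7\}$, and check $2\cdot5=10\not\equiv7$ and $2\cdot7=14\not\equiv5\pmod{3m}$ when $m>3$; more carefully, cardioidality would force $m\mid$ a fixed nonzero residue, contradiction. If $n>3$ (regardless of $m\ge3$), then $S$ is a cardioidal starter in $\ZZ_n$ with $q=(n-1)/2\ge2$ pairs, and I would argue that $W_{ST}$ necessarily contains a non-cardioidal pair exactly as in Example~\ref{Example 121}: since $S$ has at least two pairs, at least one pair $\{x,y\}\in S$ has $x\equiv 2y\pmod n$ rather than $y\equiv 2x$ (because a cardioidal starter in $\ZZ_n$, $n>3$, cannot have all pairs simultaneously "ascending" under doubling — the $q$ values $\{i\}$ and $\{2i\}$ must partition $\ZZ_n^*$, which is impossible with a consistent orientation once $q\ge2$), and then the type-(i) pair for $(r,t)=(0,0)$, namely $\{x,y\}$ itself sitting inside $\ZZ_{mn}$, together with the requirement of being cardioidal mod $mn$, fails: $\{x,y\}$ cardioidal mod $mn$ with $x,y<n$ forces $x\equiv2y$ or $y\equiv2x$ mod $mn$ hence mod $n$, so we must choose the orientation compatible with $S$'s cardioidality there; the genuine obstruction is a pair like $\{rn+x,tn+y\}$ where the mod-$n$ doubling relation and the mod-$m$ doubling relation point in opposite orientations, which exists precisely because $S$ has a "descending" pair when $n>3$.

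The main obstacle I anticipate is pinning down the combinatorial fact that a cardioidal starter in $\ZZ_n$ for $n>3$ must contain a pair $\{x,y\}$ with $x=2y\pmod n$ (i.e., the "large" element is the non-doubled one) — equivalently that $q\ge2$ forces the two orientations to coexist among the pairs $\{i,2i\}$. I would establish this by a counting/parity argument on which of $i,2i$ is the larger representative in $\{1,\dots,2q\}$: if for every $i$ in the starter's index set we had $i<2i\pmod n$ in the natural order, the map $i\mapsto 2i$ would be order-preserving on a set of size $q$ spanning $\ZZ_n^*$ together with its negatives, and tracing the "doubling orbit" of $1$ (which wraps around mod $n$ and must cross $n$, producing a descent) gives the contradiction for $n>3$, while for $n=3$ the single pair $\{1,2\}$ has no room to descend. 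Once that fact is in hand, the non-cardioidal witness pair in $W_{ST}$ is produced explicitly as above, completing the necessity.
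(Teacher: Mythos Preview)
Your sufficiency argument and your treatment of the case $n=3$, $m>3$, $\tilde S=(2,1)$ (exhibiting the pair $\{5,7\}$) are essentially the paper's proof.

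The gap is in the case $n>3$. Your plan is to show that a cardioidal starter in $\ZZ_n$ with $n>3$ must contain a ``descending'' pair $\{i,2i-n\}$ (with $i>(n-1)/2$), and then argue that such a pair, taken with $(r,t)=(0,0)$, is not cardioidal modulo $mn$. Two issues. First, the existence of a descending pair you leave as a sketch about ``tracing the doubling orbit of $1$''; this can be done, but you have not actually done it. Second, and more seriously, a descending pair $\{i,2i-n\}$ \emph{can} be cardioidal modulo $mn$: precisely when $3\mid n$ and $i=2n/3$, since then the pair is $\{n/3,2n/3\}$ and $2\cdot(n/3)=2n/3$ as integers. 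So your argument needs a further case analysis that you do not supply. The detour you suggest through pairs with $(r,t)\ne(0,0)$ and ``opposite orientations'' is neither needed nor worked out.

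The paper bypasses all of this with a single concrete observation. Since $S$ is cardioidal, the element $-2\equiv n-2$ lies in some pair of $S$, and that pair is forced to be either $\{-1,-2\}=\{n-2,n-1\}$ (if $n-2$ appears as $2\cdot(-1)$) or $\{-2,-4\}=\{n-4,n-2\}$ (if $n-2$ appears as the ``$i$''). Taking $(r,t)=(0,0)$, this pair sits in $W_{ST}$ verbatim, and a direct check shows that neither $\{n-2,n-1\}$ nor $\{n-4,n-2\}$ is cardioidal modulo $mn$ once $mn>9$. This is shorter and automatically sidesteps the $3\mid n$ obstruction, since $n-2$ is odd and cannot equal $2n/3$ for odd $n$.
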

\begin{proof}
\begin{enumerate}
\item $n=m=3$. The case $\tilde S=(1,2)$  is given in Example \ref{ex1}. Choosing $\tilde S=(2,1)$ does not change $W_{ST}$.
\item $n=3,\ m>3$ and $\tilde S=(1,2)$. By Definition \ref{WST}, the pairs of type (ii) are cardioidal as $T$ is cardioidal. Consider a pair $\{u,v\}\in W_{ST}$ of type (i). We have $\{u,v\}=\{in+1,jn+2\}$, where $j\equiv 2i\pmod m$. Then $2(in+1)\equiv jn+2\pmod{mn}$. That means that $\{u,v\}$ is cardioidal. Hence, $W_{ST}$ is a cardioidal starter of order $mn$.
\item $n=3,\ m>3$ and $\tilde S=(2,1)$. Then $\{5,7\}\in W_{ST}$. But this pair is not cardioidal of order  $mn>9$. Hence, $W_{ST}$ is not cardioidal.
\item $n> 3$.  Then either $\{-2,-1\}\pmod{n}$ or $\{-4,-2\}\pmod{n}$ is a pair in $S$ as it is cardioidal of order $n$. Regardless of what kind of starter $T$ is, either $\{n-2,n-1\}$ or $\{n-4,n-2\}$ appears in $W_{ST}$. But neither of these pairs is cardioidal of order $mn>9$. Hence, $W_{ST}$ is not cardioidal.  

\end{enumerate}
\end{proof}

Note that all strong Skolem starters referred to in Theorem \ref{Ogandzhanyants theorem1} and constructed in \cite{b21} are cardioidal. The paper \cite{b21} establishes that each strong cardioidal starter is skew, and hence, by Theorems \ref{skew theorem}, the product of strong cardioidal starters is a skew starter. For instance, $W_{RR}$  from Example \ref{Example 121} is a skew starter in $\ZZ_{121}$ (as well as $W_{RR'}$ and $W_{R'R'}$).

Theorem \ref{Ogandzhanyants theorem1} says that $\ZZ_n$ admits a (cardioidal) skew Skolem starter for all $n$ from $\overline{C_2\setminus\{3\}}$, i.e. from the multiplicative closure of the set $C_2\setminus\{3\}$, where $C_2=\{p\ \mbox{prime}| \ord_p(2)\equiv 2\pmod{4}\}$. Although in \cite{b21}, it was proved that skew cardioidal starters do not exist for orders other than indicated in Theorem \ref{Ogandzhanyants theorem1}, it was also shown that $C_2$ is infinite.

Therefore, Theorem \ref{intermediate result} implies an explicit construction of an infinite family of skew Skolem starters of all composite orders from $\overline{C_2\setminus\{3\}}$. This family is fully new because it consists of starters that are not cardioidal. This family, in its turn, gives rise to further explicit construction of infinitely many strong (and even skew) Skolem starters.

Meanwhile, Theorems \ref{intermediate result} and \ref{main} do not limit us to these composite orders. If we find, by some means, a strong Skolem starter $S$ of an order $n\not\in \overline{C_2\setminus\{3\}}$, Theorems \ref{W is strong} and \ref{WXST is strong and skew} pave an explicit way to construct a strong Skolem starter of any order $nm$, where $m\in \overline{C_2\setminus\{3\}}$. The following example illustrates this idea. 

\begin{ex} Consider a strong Skolem starter  $S=\{\{25,26\},\{20,22\},\{21,24\},\{8,12\},\\\{18,23\},\{10,16\},\{7,14\},\{1,9\},\{2,11\},\{3,13\},\{4,15\},\{5,17\},\{6,19\}\}$, found by Shalaby \cite{b09}, and $T=\{\{1,2\},
\{15,17\},\{13,16\},\{4,8\},\{5,10\},\{6,12\},\{7,14\},\{3,11\},\{9,18\}\}$ is skew in $\ZZ_{19}$. Then both $W_{ST}$ and $W_{ST}^c$ are  strong Skolem starters in $\ZZ_{27\cdot19}=\ZZ_{513}$.
In general, we can construct a strong Skolem starter in any $\ZZ_{27m},\ m\in \overline{C_2\setminus\{3\}}$. Hence, we receive infinitely many strong Skolem starters of orders divisible by 3.
\end{ex}

Moreover, by Theorem \ref{main1}, given a strong Skolem starter $S$ in $\ZZ_n$, where
 $3\nmid n$, we can construct a strong Skolem starter of any order $n^tm,\ t\ge 1, m\in \overline{C_2\setminus\{3\}}$. If, in addition,  $S$  is skew, then we can construct a skew Skolem starter of any order $n^tm,\ t\ge 1, m\in \overline{C_2\setminus\{3\}}$. 

Finally, we note a possibility of a product with a non-cardioidal nucleus, as shown in the following example.

\begin{ex}  Let $R=W^c_{PQ}$, where $P$ and $Q$ are cardioidal starters of orders $k$ and $l$ respectively, $\{k,l\}\subset \overline{C_2\setminus\{3\}}$. Then by Theorem \ref{main} and Lemma \ref{the cases}, $R$ is a skew Skolem starter, but not cardioidal. Consider $X=\bar R\cup \bar R'$. Clearly, $X$ is a skew Skolem nucleus of order $n=kl$, and $X$ is not cardioidal.

Let also $S$ and $T$ be strong Skolem starters of orders $m$ and $n$ respectively.  Then by Theorem \ref{main}, $W^X_{ST}$ is a strong Skolem starter of order $mn$. However, $W^X_{ST}$ is not cardioidal. This can be shown by the reasoning similar to that given in Lemma \ref{the cases}, Case 4.

\end{ex}

\section{Conclusion}\label{Conclusion}

In this paper, we introduced  products of two 2-partitions of $\ZZ^*_{n}$ and $\ZZ^*_{m}$ that give a 2-partition of $\ZZ^*_{nm}$. These binary multi-valued operations are interesting by themselves and deserve further investigations. 

The products reveal a remarkable phenomenon: the resulting partition inherits some properties of the initial ones such as being a starter, or being strong, skew or Skolem. Moreover, in many cases, if the resulting partition has these properties then so do the initial ones. Our results, partly relying on findings of Gross \cite{b22}, extend  his results in the special case of cyclic groups $\ZZ_n$. 
\smallskip

Not every property is passed from 2-partitions to their products. The product $W_{ST}$ of two strong starters, by Theorem \ref{W is strong}, does not lead to a strong starter unless one of the initial starters is skew. However, the product $W_{ST}^X$ of two strong starters is a strong starter if the nucleus $X_m$ is subtractive and skew, for example, if it is cardioidal $C_m$ (\ref{Cm}), where $3\nmid m$. 

Pursuing our particular interest in constructing (strong, skew) Skolem starters, we discuss the reverse side of the useful recursive tools to generate new Skolem starters out of the known ones, and hence, new Skolem sequences. (The latter, as it was briefly explained in Section \ref{introduction}, are undoubtedly valuable combinatorial objects.) 

\begin{remark}\label{enumeration}
For strong (skew) Skolem starters $S$ and $T$, we consider three specific choices of the nucleus $X$ when forming the pairs of strong (skew) Skolem $W^X_{ST}$:
\begin{enumerate}
\item $X=\bar T\cup\bar T'$; 
\item $X$ is cardioidal of the same order as $T$;
\item $X=\bar R\cup\bar R'$, where $R$ is a (skew but not cardioidal) Skolem starter of the same order as $T$.
\end{enumerate}
\end{remark}
Based on Lemma \ref{the cases}, we can state that the main objective of the paper has been achieved: we constructed new family of strong Skolem starters that are not cardioidal.
\begin{theorem}\label{main}There are infinitely many strong and there are infinitely many skew Skolem starters that are not cardioidal.
\end{theorem}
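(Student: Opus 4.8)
The plan is to produce a single infinite family of \emph{skew} Skolem starters that are not cardioidal; since every skew starter is strong, that one family settles both halves of the statement at once.

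First I would invoke the fact, recorded in the discussion following Theorem~\ref{Ogandzhanyants theorem1}, that $C_2=\{p\text{ prime}\mid\ord_p(2)\equiv2\pmod4\}$ is infinite, so $C_2\setminus\{3\}$ contains infinitely many primes. Fix one such prime $p_0$ (for instance $p_0=11$) and, using Theorem~\ref{Ogandzhanyants theorem1} together with the cardioidality of the starters constructed in \cite{b21}, fix a cardioidal skew Skolem starter $P_0$ in $\ZZ_{p_0}$. Then, for each prime $p\in C_2\setminus\{3\}$ with $p>p_0$, fix in the same way a cardioidal skew Skolem starter $P$ in $\ZZ_p$ and form the product $W_{P_0P}$ of Definition~\ref{WST} (this coincides with the cardioidal product $W^{c}_{P_0P}$ since $P$ is a cardioidal starter).

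Next I would read off the two required properties. Because $P_0$ and $P$ are both skew and Skolem, the direct part of Theorem~\ref{intermediate result} yields that $W_{P_0P}$ is a skew Skolem starter in $\ZZ_{p_0p}$. Because the \emph{first} factor $P_0$ has order $p_0>3$, Lemma~\ref{the cases} (the case $n>3$) shows that $W_{P_0P}$ is \emph{not} cardioidal, and this is so no matter which ordering choices are made in forming the product. Letting $p$ range over the infinitely many primes of $C_2\setminus\{3\}$ exceeding $p_0$, the orders $p_0p$ are pairwise distinct, so we obtain infinitely many skew Skolem starters, each one strong (skew implies strong) and none of them cardioidal, which is precisely Theorem~\ref{main}.

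I do not expect a genuine obstacle: the argument is essentially bookkeeping built on Theorems~\ref{intermediate result} and Lemma~\ref{the cases}. The single point needing care is the use of Lemma~\ref{the cases} — one must keep the factor of order greater than $3$ in the \emph{first} slot, so as to avoid the two exceptional configurations ($n=m=3$, and $m>3$ with $\tilde S=(1,2)$) in which a product of two cardioidal starters can still turn out cardioidal. As an aside, if one wishes to separate the two claims explicitly, feeding a strong but non-skew Skolem starter — such as the order-$17$ starter of Example~\ref{order 187} — into the first slot and cardioidal skew Skolem starters into the second produces, via Theorems~\ref{W is strong} and \ref{intermediate result}(2), an infinite supply of Skolem starters that are strong, non-skew and non-cardioidal.
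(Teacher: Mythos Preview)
Your argument is correct and follows essentially the same route as the paper: take two cardioidal skew Skolem starters (whose existence in infinitely many orders is guaranteed by \cite{b21}), form their product $W_{ST}$, invoke Theorem~\ref{intermediate result} to obtain a skew Skolem starter, and invoke Lemma~\ref{the cases} (case $n>3$) to see it is not cardioidal. The paper's proof is terser, pointing to the nucleus choices of Remark~\ref{enumeration} rather than fixing a concrete $p_0$ and varying $p$, but the underlying mechanism is identical; your version is simply more explicit about producing pairwise distinct orders and about which slot must carry the factor of order greater than~$3$.
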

\begin{proof} 
The infinitude of the class of strong (skew) cardioidal starters is proven in \cite{b21}.
The choices of nucleus outlined in 
Remark \ref{enumeration} in the construction of $W^X_{ST}$ based on two strong (skew) cardioidal starters lead to a new strong (skew) Skolem starter which is not cardioidal. 
\end{proof}
Our new results give further support to Shalaby's conjecture stated in 1991.
\smallskip

Observe that the approach of constructing new strong, skew and Skolem starters outlined in this paper, has its limitations because not every starter in a group $\ZZ_{mn}$, is a product of starters in $\ZZ_n$ and $\ZZ_m$ respectively. For example, due to non-existence of Skolem starters of order 5 and 7 and non-existence of a strong starter of order 5, the strong Skolem starter of order 35 found in \cite{b09} can not be constructed as a product $W^X_{ST}$ of two starters with a nucleus, as it would have contradicted Theorems \ref{XSkolemness} and \ref{WXST is strong and skew}.

Since the theorems of this paper establish equivalence conditions of existence of starters of certain types and orders, it helps to cut off some unsuccessful directions in their search. Finding a successful way in these cases is yet an open problem and a subject of further investigations.

\end{document}